\documentclass[a4paper, 12pt]{article}
\usepackage{mystyle}

%------------------------ Main Document --------------------------
\begin{document}
	\onehalfspacing
	
%-------------- Information For The Title Page
% Title page info (see uowsurveytitlepage package)
	\title{Equivariant $KK$-theory for non-Hausdorff groupoids} 
	\author{Lachlan E. MacDonald}

	\date{December 2019}
	
	\maketitle
	
\begin{abstract}
	We give a detailed and unified survey of equivariant $KK$-theory over locally compact, second countable, locally Hausdorff groupoids.  We indicate precisely how the ``classical" proofs relating to the Kasparov product can be used almost word-for-word in this setting, and give proofs for several results which do not currently appear in the literature.
\end{abstract}

This article is intended as a detailed survey of the theory of groupoid-equivariant $KK$-theory, in the setting of locally compact, second countable, locally Hausdorff groupoids.  Following a substantial period of innovation and development \cite{KasparovTech,cs,sk1,higsonkastech,KasparovEqvar} in the non-equivariant and group-equivariant cases, groupoid-equivariant $KK$-theory was first treated, for Hausdorff groupoids, by Le Gall in his PhD thesis \cite{legall}.  Since many examples of groupoids are only locally Hausdorff (for instance, the holonomy groupoids of foliated manifolds \cite{folops}),  it is desirable to have a groupoid-equivariant $KK$-theory for \emph{non-Hausdorff} groupoids as well.

Despite the demand arising from applications to foliation theory, the literature on equivariant $KK$-theory over non-Hausdorff groupoids has, up until this point, been rather sparse.  The non-Hausdorff theory makes its first appearance in \cite{tu}, in which, for details on the inner workings of the theory, the reader is referred to the expositions given for Hausdorff groupoids in \cite{legall2,twistedKstacks}.  Since then the theory has been very rapidly expounded upon, in an extremely general context that covers not only groupoids but Hopf algebras, in \cite{iakovos} for applications to singular foliations.  Finally, the theory has found use in \cite{macr1} in the study of the Godbillon-Vey invariant of regular foliated manifolds.

The common denominator in all of the treatments \cite{tu,iakovos,macr1} of equivariant $KK$-theory over non-Hausdorff groupoids is that each has been focussed on a particular application of equivariant $KK$-theory, rather than on a detailed development of the theory itself.  As a consequence, most details (some of which are somewhat nontrivial) are skipped over, and it is difficult to glean a precise understanding of how the non-Hausdorffness of the underlying groupoid affects the ``classical" definitions, statements of results, and their proofs. The goal of the present paper is to rectify this state of the literature, by providing a unified and detailed description of the theory.  In particular, we include all the necessary definitions (of which there are many), and include detailed proofs for several results which do not yet appear in the literature.  We will see, as remarked in \cite{iakovos}, that much of the theory can be proved in exactly the manner of the ``classical" theory, provided one makes some conceptual substitutions.

Let us briefly describe the content of the paper.  The first three sections consist of the relevant definitions for groupoids, upper-semicontinuous bundles and actions of groupoids on algebras and Hilbert modules.  Note that we do \emph{not} go to the generality of Fell bundles, and we refer the reader to \cite{twistedKstacks} and \cite{tu} for an indication of how the theory works at this level of generality.  The bundle-theoretic approach we take to equivariant $KK$-theory is heavily influenced by \cite{felldoran,williamscrossed,muhwil}, which are referenced throughout the paper.  The following two sections consist of definitions and results regarding equivariant $KK$-theory, its functoriality, and the Kasparov product.  We have made a conscious choice not to derive our proofs from those given by Le Gall \cite{legall,legall2}, whose thesis and paper on the topic can be relatively difficult to find.  Instead we closely follow the ``classical" theory as detailed in \cite{KasparovTech,sk1,higsonkastech,KasparovEqvar}, inspired by the very modern approach of \cite{iakovos}.  The final two sections are concerned with crossed products, both full and reduced, and the associated descent map on equivariant $KK$-theory, for which we draw greatly on \cite{koshskand,muhwil}.  Throughout the exposition, we include a description of the unbounded picture \cite{baajjulg1} for future applications where a need to compute explicit representatives of Kasparov products \cite{kucer,LRV, mes14, kl13, bmv, mr15} and index formulae \cite{CM,higsonlocind, cprs1, cprs2, cprs3, cprs4, cgrs1, cgrs2} may arise.  The results regarding the unbounded picture have already appeared, in the context of Lie groupoids, in \cite{macr1}, which itself draws on the theory outlined in \cite{pierrot} in the Hausdorff setting.

\subsection{Acknowledgements}

I thank the Australian Federal Government for financing my doctoral research, of which the present survey forms a part, via a Research Training Program scholarship.  Special thanks go to A. Rennie, for his consistent support over the past several years, and to A. Carey, for encouraging me to write this article.

\section{Groupoids}

We begin with the basic definitions that we require regarding groupoids and their actions.  

\begin{comment}
Groupoids simultaneously generalise both spaces and groups, and can therefore encode complicated dynamical information.  In this section we will define what is meant by a locally compact, locally Hausdorff topological groupoid.
\end{comment}

\begin{defn}\label{groupoid}
	A \textbf{groupoid} $(\GG^{(0)},\GG,r,s,m,i)$ consists of a set $\GG$, the \textbf{total space}, with a distinguished subset $\GG^{(0)}$, the \textbf{unit space}, maps $r,s:\GG\rightarrow \GG^{(0)}$ called the \textbf{range} and \textbf{source} respectively, a \textbf{multiplication map} $m:\GG\times_{s,r}\GG\rightarrow \GG$ and an \textbf{inversion map} $i:\GG\rightarrow \GG$ such that
	\begin{enumerate}
		\item $i$ satisfies $i^{2} = \id$,
		\item $m$ is associative,
		\item $r\circ i = s$.
		\item $m(u,x) = u = m(y,u)$ for all $u\in \GG$ with $r(u) = y$ and $s(u) = x$.
	\end{enumerate}
We say that $\GG$ is \textbf{locally compact, second-countable, and locally Hausdorff} if its total space admits a locally compact, second-countable and locally Hausdorff topology for which all of the maps $r,s,m,i$ are continuous, and for which $r$ and $s$ are open.  We will always assume in addition that the unit space $\GG^{(0)}$ is Hausdorff.  A \textbf{homomorphism} $\varphi:\HH\rightarrow\GG$ of locally compact, second-countable, locally Hausdorff groupoids is a continuous map of their total spaces which is compatible with the multiplication and inversion maps on each.
\end{defn}

We will usually denote a groupoid $(\GG^{(0)},\GG,r,s,m,i)$ by simply $\GG$, $m(u,v)$ by simply $uv$, and $i(u)$ by $u^{-1}$.

\begin{defn}\label{gspace}
	Let $X$ be a topological space.  A \textbf{left action} of $\GG$ on $X$ consists of a continuous map $p:X\rightarrow \GG^{(0)}$ called the \textbf{anchor map}, together with a continuous map $\alpha:\GG\times_{s,p}X\rightarrow X$, denoted $\alpha(u,x)\mapsto \alpha_{u} (x)$, such that
	\begin{enumerate}
		\item $p(\alpha_{u}(x)) = r(u)$ for all $(u,x)\in \GG\times_{s,p}X$,
		\item $\alpha_{uv}(x) = \alpha_{u}(\alpha_{v}(x))$ for all $(v,x)\in \GG\times_{s,p}X$ and $(u,v)\in \GG^{(2)}$,
		\item $\alpha_{p(x)}(x) = x$ for all $x\in X$.
	\end{enumerate}
\end{defn}
\begin{comment}
If $\GG$ acts on a space $X$, then the sets $\GG\ltimes X:=\GG\times_{s,p}X$ and $X\rtimes\GG:= X\times_{p,r}\GG$ both admit the structure of topological groupoids with unit space $X$, and are isomorphic as topological groupoids under the map $X\times_{p,r}\GG\ni(x,u)\mapsto(u,u^{-1}\cdot x)\in \GG\times_{s,p}X$.  We refer to both of these groupoids as the \emph{action groupoid} corresponding to the action of $\GG$ on $X$.
\end{comment}

\section{Upper-semicontinuous bundles}

We give a brief study of upper-semicontinuous bundles of spaces, algebras and modules.  We refer primarily to the delightful books \cite{felldoran} and \cite{williamscrossed}.  Concerning notation, for any map $f:Y\rightarrow X$ of sets, we will for each $x\in X$ denote by $Y_{x}$ the fibre $f^{-1}\{x\}$ over $x$.  We will moreover let $\mathbb{K}$ denote either $\CB$ or $\RB$.  Note that, just as in \cite{KasparovEqvar}, all of what follows in this article is valid for both complex and real $C^{*}$-algebras and Hilbert modules.  The notation $C_{0}(X)$, defined for locally compact spaces $X$, is used to mean the continuous $\mathbb{K}$-valued functions vanishing at infinity on $X$.

\begin{defn}\label{uscbb}
	Suppose that $X$ is a topological space.  By an \textbf{upper-semicontinuous Banach-bundle} over $X$, we mean a topological space $\AF$ together with an open surjective map $p_{\AF}:\AF\rightarrow X$ and a $\mathbb{K}$-Banach space structure on each fibre $\AF_{x}$ such that
	\begin{enumerate}
		\item the map $a\mapsto\|a\|$ is upper semicontinuous from $\AF$ to $\mathbb{R}$ (that is, for all $\epsilon>0$, $\{a\in\AF:\|a\|\geq\epsilon\}$ is closed),
		\item the map $+:\AF\times_{p_{\AF},p_{\AF}}\AF\rightarrow \AF$ given by $(a,b)\mapsto a+b$ is continuous,
		\item the map $\mathbb{K}\times\AF\rightarrow\AF$ given by $(\lambda,a)\mapsto\lambda a$ is continuous,
		\item if $\{a_{i}\}$ is a net in $\AF$ such that $p_{\AF}(a_{i})\rightarrow x$ and $\|a_{i}\|\rightarrow 0$, then $a_{i}\rightarrow 0_{x}$, where $0_{x}$ is the zero element in $\AF_{x}$.
	\end{enumerate}
	A \textbf{section} of $p_{\AF}$ is a function $\beta:X\rightarrow\AF$ such that $p_{\AF}\circ\beta = \id_{X}$.  The space of continuous sections of $p_{\AF}$ is denoted $\Gamma(X;\AF)$, the space of continuous sections that are bounded in the norm topology denoted $\Gamma_{b}(X;\AF)$, and the space of continuous sections vanishing at infinity in the norm topology on the fibres is denoted $\Gamma_{0}(X;\AF)$.
	
	If $Y$ is a topological space and $\phi:Y\rightarrow X$ is a continuous map, we denote by $\phi^{*}\AF$ the \textbf{pullback} of $\AF$ by $\phi$ to an upper-semicontinuous bundle over $Y$, whose fibre over $y\in Y$ is $\AF_{\phi(y)}$, and for any continuous section $a\in\Gamma(X;\AF)$ we denote by $\phi^{*}a$ its pullback to a section of $\phi^{*}\AF$ defined for $y\in Y$ by the formula $(\phi^{*}a)(y):=a(\phi(y))$.
\end{defn}

Some important special cases of upper-semicontinuous Banach bundes occur when the fibres are $C^{*}$-algebras, or Hilbert modules thereover.  While upper-semicontinuous bundles of $C^{*}$-algebras are by now quite well-studied, their Hilbert module analogues have not yet made an appearance in the literature.

\begin{defn}
	An upper-semicontinuous Banach bundle $p_{\AF}:\AF\rightarrow X$ is said to be an \textbf{upper-semicontinuous $C^{*}$-bundle} if each $\AF_{x}$ is a $C^{*}$-algebra, for which the multiplication $\cdot:\AF\times_{p_{\AF},p_{\AF}}\AF\rightarrow\AF$ is continuous.  We  say that $\AF$ is \textbf{$\ZB_{2}$-graded} if it admits a direct sum decomposition $\AF = \AF^{(0)}\oplus\AF^{(1)}$ such that for each $x\in X$, $\AF_{x} = \AF^{(0)}_{x}\oplus\AF^{(1)}_{x}$ is a $\ZB_{2}$-graded $C^{*}$-algebra (cf. \cite[Section 2.1]{KasparovTech}).
	
	If $p_{\AF}:\AF\rightarrow X$ is an upper-semicontinuous $C^{*}$-bundle, an \textbf{upper-semicontinuous Hilbert $\AF$-module bundle} is an upper semicontinuous Banach bundle $p_{\EF}:\EF\rightarrow X$ such that each $\EF_{x}$ is a Hilbert $\AF_{x}$-module, with $\AF_{x}$-valued inner product $\langle\cdot,\cdot\rangle_{x}$, and for which the maps $\langle\cdot,\cdot\rangle:\EF\times_{p_{\EF},p_{\EF}}\EF\rightarrow\AF$ and $\EF\times_{p_{\EF},p_{\AF}}\AF\rightarrow\EF$ induced by the inner product and right action respectively are continuous.  We say that such a bundle is \textbf{$\ZB_{2}$-graded} if it admits a direct sum decomposition $\EF = \EF^{(0)}\oplus\EF^{(1)}$ such that for each $x\in X$, $\EF_{x} = \EF^{(0)}_{x}\oplus\EF^{(1)}_{x}$ is a $\ZB_{2}$-graded Hilbert $\AF_{x}$-module (cf. \cite[Section 2.2]{KasparovTech}).
\end{defn}

\begin{rmk}\normalfont
	For the remainder of this paper, we will \emph{always} assume our algebra and Hilbert module bundles to be $\ZB_{2}$-graded.  In particular, all $C^{*}$-algebras and Hilbert modules are assumed to be $\ZB_{2}$-graded, and all tensor products and commutators are assumed to be $\ZB_{2}$-graded also (cf. \cite[Section 2]{KasparovTech}).
\end{rmk}

\begin{rmk}\normalfont
	For an upper-semicontinuous Banach bundle $p_{\BF}:\BF\rightarrow X$ over an arbitrary topological space $X$, there is no guarantee of a wealth of continuous sections of $p_{\BF}$.  When $X$ is locally compact and Hausdorff, however, by results of Hofmann \cite{hofmann} (see also the discussion in \cite[p. 16]{muhwil}), for any $x\in X$ and $b\in\BF_{x}$, one is guaranteed a section $\sigma$ for which $\sigma(x) = b$.  For the rest of this section we will \emph{always let $X$ denote a locally compact, Hausdorff space}.
\end{rmk}

Observe that if $p_{\AF}:\AF\rightarrow X$ is an upper-semicontinuous $C^{*}$-bundle, then the continuous sections vanishing at infinity $\Gamma_{0}(X;\AF)$ themselves form a $C^{*}$-algebra under pointwise operations and the supremum norm, and inherit a natural $\ZB_{2}$-grading from the fibres.  Letting $\MM(\AF)$ denote the associated upper-semicontinuous $C^{*}$-bundle whose fibre over $x\in X$ is equal to $\MM(\AF_{x})$, we can naturally identify $\MM(\Gamma_{0}(X;\AF))$ with the section algebra $\Gamma_{b}(X;\MM(\AF))$.  Note moreover that the $C^{*}$-algebra $\Gamma_{0}(X;\AF)$ associated to any upper-semicontinuous $C^{*}$-bundle $p_{\AF}:\AF\rightarrow X$ admits a nondegenerate homomorphism $C_{0}(X)\rightarrow Z\MM(\Gamma_{0}(X;\AF))$ into the center of $\MM(A)$, defined by pointwise scalar multiplication.  That is, $\Gamma_{0}(X;\AF)$ is a \emph{$C_{0}(X)$-algebra} in the following sense.

\begin{defn}
	A $C^{*}$-algebra $A$ is called a \textbf{$C_{0}(X)$-algebra} if it admits a nondegenerate homomorphism $C_{0}(X)\rightarrow Z\MM(A)$.  A $C^{*}$-homomorphism of $C_{0}(X)$-algebras is said to be a \textbf{$C_{0}(X)$-homomorphism} if it is in addition a homomorphism of $C_{0}(X)$-modules.
\end{defn}

Given any $C_{0}(X)$-algebra $A$, for each $x\in X$ one defines a $C^{*}$-algebra (and left $A$-module) $\AF_{x}:=A/(I_{x}\cdot A)$, where $I_{x}$ is the kernel of the evaluation functional $f\mapsto f(x)$ on $C_{0}(X)$.  The resulting space $\AF:=\bigsqcup_{x\in X}\AF_{x}$ is topologised by \cite[Theorem II.13.18]{felldoran} and, with the obvious projection $p_{\AF}:\AF\rightarrow X$, becomes an upper-semicontinuous $C^{*}$-bundle \cite[Theorem C.25]{williamscrossed}.  The $C_{0}(X)$-algebra $A$ is then  $C_{0}(X)$-isomorphic to $\Gamma_{0}(X;\AF)$ \cite[Theorem C.27]{williamscrossed}.  Thus $C_{0}(X)$-algebras are the same thing as sections of upper-semicontinuous $C^{*}$-bundles, and we will without further comment assume $A$ to be identified with its associated section algebra $\Gamma_{0}(X;\AF)$ for the remainder of the document.  Notice that any $C_{0}(X)$-homomorphism $\phi:A\rightarrow B$ of $C_{0}(X)$-algebras descends for each $x\in X$ to a $C^{*}$-homomorphism $\phi_{x}:\AF_{x}\rightarrow\BF_{x}$ of the fibres.

In a similar fashion, if $A = \Gamma_{0}(X;\AF)$ is a $C_{0}(X)$-algebra and $\EE$ is a Hilbert $A$-module, then for each $x\in X$ one defines the Hilbert $\AF_{x}$-module $\EF_{x}$ to be the balanced tensor product $E\hotimes_{A}\AF_{x}$.  The space $\EF:=\bigsqcup_{x\in X}\EF_{x}$ may also be topologised by means of \cite[Theorem II.13.18]{felldoran} to obtain an upper-semicontinuous Hilbert $\AF$-module bundle $p_{\EF}:\EF\rightarrow X$, where one proves the continuity of the $\AF$-valued inner products and of the right action of $\AF$ on $\EF$ using the same estimates as in the verification of Axiom A3 in \cite[Theorem C.25]{williamscrossed}.  The continuous sections vanishing at infinity $\Gamma_{0}(X;\EF)$ of this bundle are equipped with pointwise operations to furnish a Hilbert $\Gamma_{0}(X;\AF)$-module, to which $\EE$ is canonically isomorphic as a Hilbert $A$-module.  We will without further comment assume that $\EE$ is identified with its associated section space $\Gamma_{0}(X;\EF)$.

Given such a Hilbert module $\EE = \Gamma_{0}(X;\EF)$, we have associated upper-semicontinuous bundles of $C^{*}$-algebras $\KK(\EF)$ and $\LL(\EF)$, whose fibres over $x\in X$ are $\KK(\EF_{x})$ and $\LL(\EF_{x})$ respectively.  By the identification $\EE = \Gamma_{0}(X;\EF)$ we then have $\KK(\EE) = \Gamma_{0}(X;\KK(\EF))$ and $\LL(\EE) = \Gamma_{b}(X;\LL(\EF))$.  The natural $C_{0}(X)$-module structures on these spaces defined by pointwise multiplication then agree with \cite[Definition 1.5]{KasparovEqvar}, and $\KK(\EE)$ in particular becomes a $C_{0}(X)$-algebra when equipped with this structure.

Let us now give some standard definitions, phrased in terms of upper-semicontinuous bundles.

\begin{defn}\label{pullbax}
	Let $A = \Gamma_{0}(X;\AF)$ be a $C_{0}(X)$-algebra, and let $\EE = \Gamma_{0}(X;\EF)$ and $\EE' = \Gamma_{0}(X;\EF')$ be Hilbert $A$-modules.  The \textbf{direct sum} $\EE\oplus\EE'$ of $\EE$ and $\EE'$ is the Hilbert $A$-module $\Gamma_{0}(X;\EF\oplus\EF')$, where $\EF\oplus\EF'$ is the upper-semicontinuous Hilbert $\AF$-module bundle whose fibre over $x\in X$ is the direct sum $\EF_{x}\oplus\EF'_{x}$ in the sense of \cite[p. 518]{KasparovTech}.
	
	If $B =\Gamma_{0}(X;\BF)$ is another $C_{0}(X)$-algebra and $\FF = \Gamma_{0}(X;\mathfrak{F})$ is a Hilbert $B$-module, we say that a $C^{*}$-homomorphism $\pi:A\rightarrow\LL(\FF)$ is a $C_{0}(X)$\textbf{-representation} if it is additionally a homomorphism of $C_{0}(X)$-modules.  Given such a representation, the \textbf{balanced tensor product} $\EE\hotimes_{A}\FF$ is the Hilbert $B$-module $\Gamma_{0}(X;\EF\hotimes_{\AF}\mathfrak{F})$, where $\EF\hotimes_{\AF}\mathfrak{F}$ is the upper-semicontinuous Hilbert $\BF$-module bundle whose fibre over $x\in X$ is the balanced tensor product $\EF_{x}\hotimes_{\AF_{x}}\mathfrak{F}_{x}$ in the sense of \cite[Section 2.8]{KasparovTech}.
	
	Let $Y$ be a locally compact Hausdorff space and let $\phi:Y\rightarrow X$ be a continuous map.  The \textbf{pullback} of a $C_{0}(X)$-algebra $A = \Gamma_{0}(X;\AF)$ by $\phi$ is the $C_{0}(Y)$-algebra $\phi^{*}A:= \Gamma_{0}(Y;\phi^{*}\AF)$.  If $\EE = \Gamma_{0}(X;\EF)$ is a Hilbert $A$-module, then its pullback by $\phi$ is the Hilbert $\phi^{*}A$-module $\phi^{*}\EE:=\Gamma_{0}(Y;\phi^{*}\EF)$.	
\end{defn}

\begin{rmk}\label{rmkpullbax}\normalfont
	The balanced tensor product and direct sum of Hilbert modules given in Definition \ref{pullbax} are easily seen to coincide with the corresponding analytic notions.  For instance, any element $\xi\hotimes\eta$ of the balanced tensor product $(\EE\hotimes_{A}\FF)_{an}$ in the sense of \cite[Section 2.8]{KasparovTech} canonically determines an element $\overline{\xi\hotimes\eta}:x\mapsto\xi(x)\hotimes\eta(x)$ of $\Gamma_{0}(X;\EF\hotimes_{\AF}\mathfrak{F})$, and the resulting map $(\EE\hotimes_{A}\FF)_{an}\rightarrow\Gamma_{0}(X;\EF\hotimes_{\AF}\mathfrak{F})$ an isomorphism by \cite[Proposition C.24]{williamscrossed}.
	
	Regarding pullbacks, note that if $\phi:Y\rightarrow X$ is a continuous map of locally compact Hausdorff spaces, then $C_{0}(Y)$ carries a $C_{0}(X)$-module structure defined for $f\in C_{0}(X)$ and $g\in C_{0}(Y)$ by
	\[
	(f\cdot g)(y):=f(\phi(y))g(y),\hspace{7mm}y\in Y.
	\]
	As remarked in \cite[Section 2.7 (d)]{koshskand}, if $A = \Gamma_{0}(X;\AF)$ is a $C_{0}(X)$-algebra, then the balanced tensor product $A\hotimes_{C_{0}(X)}C_{0}(Y)$ (where $C_{0}(Y)$ is of course assumed to be trivially graded) identifies naturally with $\phi^{*}A$ via the map $a\hotimes g\mapsto g\cdot\phi^{*}a$, where the $\cdot$ denotes pointwise scalar multiplication.  Thus Definition \ref{pullbax} agrees (up to isomorphism) with the definitions used in \cite{legall,iakovos}.  In the same way, if $\EE$ is a Hilbert $A$-module, then regarding $\phi^{*}A$ as a left $A$-module via multiplication, the balanced tensor product $\EE\hotimes_{A}\phi^{*}A$ is isomorphic as a Hilbert $\phi^{*}A$-module to $\phi^{*}\EE$ via the map sending $\xi\hotimes(g\cdot\phi^{*}a)\in\EE\hotimes_{A}\phi^{*}A$ to $g\cdot \phi^{*}(\xi\cdot a)\in\phi^{*}\EE$ (where now the $\cdot$ inside the brackets denotes the right action of $A$ on $\EE$).
\end{rmk}

The final notion we will need is the pullback of an operator on a Hilbert module, which is outlined in the generality of unbounded operators in \cite[Section 2]{macr1}.  Let $A = \Gamma_{0}(X;\AF)$ be a $C_{0}(X)$-algebra, and let $\EE = \Gamma_{0}(X;\EF)$ be a Hilbert $A$-module.  Let $T:\dom(T)\rightarrow \EE$ be an $A$-linear operator on $\EE$.  For each $x\in X$, define $\mathfrak{dom}(T)_{x}:=\{\xi(x):\xi\in\dom(T)\}\subset\EF_{x}$ and let $T(x):\mathfrak{dom}(T)_{x}\rightarrow\EF_{x}$ be the $\AF_{x}$-linear operator defined by
	\[
	T(x)\xi(x):=(T\xi)(x),\hspace{7mm}\xi\in\dom(T).
	\]
Defining $\mathfrak{dom}(T):=\bigcup_{x\in X}\mathfrak{dom}(T)_{x}\subset\EF$, we see that $\dom(T)$ identifies with the subspace $\Gamma_{0}(X;\mathfrak{dom}(T))$ of $\Gamma_{0}(X;\EF)$ consisting of sections whose value at each $x\in X$ is in $\mathfrak{dom}(T)_{x}$.  The pullback of $T$ by a continuous map of locally compact Hausdorff spaces is then defined in the obvious way.

\begin{defn}\label{operatorpullback}
	Let $\phi:Y\rightarrow X$ be a continuous map of locally compact Hausdorff spaces, let $A = \Gamma_{0}(X;\AF)$ be a $C_{0}(X)$-algebra, and let $\EE = \Gamma_{0}(X;\EF)$ be a Hilbert $A$-module.  If $T:\dom(T)\rightarrow\EE$ is an $A$-linear operator, then the \textbf{pullback of $T$ by $\phi$} is the operator $\phi^{*}T:\dom(\phi^{*}T)\rightarrow\phi^{*}\EE = \Gamma_{0}(Y;\phi^{*}\EF)$ defined on the domain $\dom(\phi^{*}T):=\Gamma_{0}(Y;\phi^{*}\mathfrak{dom}(T))\subset\phi^{*}\EE$ by the formula
	\[
	\big(\phi^{*}T\,\eta\big)(y):=T(\phi(y))\eta(y)\in\EF_{\phi(y)},\hspace{7mm}\eta\in\dom(\phi^{*}T).
	\]
	In particular, if $T\in\LL(\EE)$, then $\phi^{*}T\in\LL(\phi^{*}\EE)$.
\end{defn}

Finally, let us point out if $T\in\LL(\EE)$ as in Definition \ref{operatorpullback}, then $\phi^{*}T\in\LL(\phi^{*}\EE)$ identifies via the map considered in Remark \ref{rmkpullbax} with the operator $T\hotimes 1$ on $\EE\hotimes_{A}\phi^{*}A$ that is used in \cite{legall}.

\section{Groupoid actions on algebras and modules}

We will assume for the rest of the paper that $\GG$ is a locally compact, second countable, locally Hausdorff groupoid with locally compact, Hausdorff unit space $\GG^{(0)}$.  The theory of $\GG$-algebras was originally developed by Le Gall in \cite{legall} for Hausdorff $\GG$.  In the locally Hausdorff setting it has been expounded upon in \cite{koshskand, tu, muhwil, iakovos}.  We primarily follow the bundle-theoretic picture adopted in \cite{muhwil}.

\begin{defn}\label{Galg}  Let $A = \Gamma_{0}(\GG^{(0)};\AF)$ be a $C_{0}(\GG^{(0)})$-algebra.  An action $\alpha$ of $\GG$ on $A$ consists of a family $\{\alpha_{u}\}_{u\in \GG}$ such that
	\begin{enumerate}
		\item for each $u\in \GG$, $\alpha_{u}:\AF_{s(u)}\rightarrow\AF_{r(u)}$ is a degree 0 isomorphism of $C^{*}$-algebras,
		\item the map $\GG\times_{s,p_{\AF}}\AF\rightarrow \AF$ defined by $(u,a)\mapsto\alpha_{u}(a)$ defines a continuous action of $\GG$ on $\AF$.
	\end{enumerate}
	The triple $(\AF,\GG,\alpha)$ is called a \textbf{groupoid dynamical system}, and we say that $A$ is a \textbf{$\GG$-algebra} and that it admits a \textbf{$\GG$-structure}.
\end{defn}

\begin{comment}
Let us consider a common example.

\begin{ex}\label{algex2}\normalfont
	Let $X$ be a $\GG$-space as in Definition \ref{gspace}, with anchor map $p:X\rightarrow\GG^{(0)}$.  Then the $C_{0}(\GG^{(0)})$-algebra $C_{0}(X)$ is a $\GG$-algebra: for each $u\in\GG$, we have an isomorphism $\alpha_{u}:C_{0}(X_{s(u)})\rightarrow C_{0}(X_{r(u)})$ defined by
	\[
	\alpha_{u}(f)(x):=f(u^{-1}\cdot x)
	\]
	for all $x\in X_{r(u)}$.
\end{ex}
\end{comment}

\begin{rmk}\normalfont
By \cite[Lemma 4.3]{muhwil}, Definition \ref{Galg} agrees with the definition given in \cite[Section 3.2]{koshskand} using pullbacks over Hausdorff open subsets of $\GG$, and agrees with \cite[Definition 3.1.1]{legall} when $\GG$ is Hausdorff.
\end{rmk}

The action of $\GG$ on a Hilbert module over a $\GG$-algebra is defined in a similar way.

\begin{defn}\label{Ghilb}
	Let $(\AF,\GG,\alpha)$ be a groupoid dynamical system, and let $\EE = \Gamma_{0}(\GG^{(0)};\EF)$ be a Hilbert $A$-module.  An action $W$ of $\GG$ on $E$ consists of a family $\{W_{u}\}_{u\in \GG}$ such that
	\begin{enumerate}
		\item for each $u\in \GG$, $W_{u}:\EF_{s(u)}\rightarrow \EF_{r(u)}$ is a degree 0, isometric isomorphism of Banach spaces such that
		\[
		\langle W_{u}e, W_{u}f\rangle_{r(u)} = \alpha_{u}\big(\langle e,f\rangle_{s(u)}\big)
		\]
		for all $e,f\in\EF_{s(u)}$, and
		\item the map $\GG\times_{s,p_{\EF}}\EF\rightarrow\EF$ defined by $(u,e)\mapsto W_{u}e$ defines a continuous action of $\GG$ on $\EF$.
	\end{enumerate}
	The tuple $(\EF,{\AF},\GG,W,\alpha)$ is called a \textbf{Hilbert module representation}.  We then say that $\EE$ is a \textbf{$\GG$-Hilbert module} and that $\EE$ admits a \textbf{$\GG$-structure}.  Conjugation by $W$ gives rise to a continuous action $\varepsilon:\GG\times_{s,p_{\LL(\EF)}}\LL(\EF)\rightarrow\LL(\EF)$ of $\GG$ on the upper semicontinuous bundle $\LL(\EF)$, which in particular makes $(\KK(\EF),\GG,\varepsilon)$ a $\GG$-algebra.
\end{defn}

\begin{rmk}\normalfont
	The arguments of \cite[Lemma 4.3]{muhwil} also show that Definition \ref{Ghilb} agrees with the notion given in \cite[Section 4.2.4]{iakovos}.
\end{rmk}

\begin{defn}
	Let $A$ and $B$ be $\GG$-algebras, with $\GG$-actions $\alpha$ and $\beta$ respectively.  We say that a homomorphism $\phi:A\rightarrow B$ is a \textbf{$\GG$-homomorphism} if
	\[
	\phi_{r(u)}\circ\alpha_{u} = \beta_{u}\circ\phi_{s(u)}
	\]
	for all $u\in\GG$.  Let $\EE$ be a $\GG$-Hilbert $B$-module, with action $\varepsilon$ of $\GG$ on $\LL(\EF)$.  We say that a $C_{0}(\GG^{(0)})$-representation is a \textbf{$\GG$-representation} if for all $u\in\GG$ we have
	\[
	\varepsilon_{u}\circ\pi_{s(u)} = \pi_{r(u)}\circ \alpha_{u}.
	\]
	Such a representation makes $\EE$ into a \textbf{$\GG$-equivariant $A$-$B$-correspondence}.
\end{defn}

Let us end this section by noting that if $A$ is a $\GG$-algebra, and $\EE$ and $\EE'$ are two $\GG$-Hilbert $A$-modules with $\GG$-structures $W$ and $W'$ respectively, then the formula
\[
(W\oplus W')_{u}:=W_{u}\oplus W'_{u}:\EF_{s(u)}\oplus\EF'_{s(u)}\rightarrow \EF_{r(u)}\oplus\EF'_{r(u)},\hspace{7mm}u\in\GG
\]
defines a $\GG$-structure $W\oplus W'$ on their direct sum $\EE\oplus\EE'$.  Similarly, if $B$ is another $\GG$-algebra and $\FF$ is an equivariant $A$-$B$-correspondence with $\GG$-structure $V$, then the formula
\[
(W\hotimes V)_{u}:=W_{u}\hotimes V_{u}:\EF_{s(u)}\hotimes_{\AF_{s(u)}}\mathfrak{F}_{s(u)}\rightarrow\EF_{r(u)}\hotimes_{\AF_{r(u)}}\mathfrak{F}_{r(u)},\hspace{7mm}u\in\GG
\]
defines a $\GG$-structure $W\hotimes V$ on the balanced tensor product $\EE\hotimes_{A}\FF$.  When considering direct sums and balanced tensor products of Hilbert modules in what follows, we will always consider them to be equipped with these $\GG$-structures without further comment. 

\begin{comment}
\begin{defn}\label{equirep}
	Let $(E,W)$ be a $\GG$-Hilbert $B$-module over a $\GG$-algebra $(B,\beta)$, and suppose that $(A,\alpha)$ is another $\GG$-algebra.  We say that a representation $\pi:A\rightarrow\LL(E)$ is \textbf{equivariant} if for every Hausdorff open subset $U\subset \GG$ we have
	\[
	\Ad_{W_{U}}(\pi^{s}_{U}(a)) = \pi^{r}_{U}(\alpha_{U}(a))
	\]
	for all $a\in A$.  Here $\pi^{s}_{U}:=\pi\otimes 1_{C_{b}(U)}$ and $\pi^{r}_{U}:=\pi\otimes 1_{C_{b}(U)}$ are respectively the induced homomorphisms $s|_{U}^{*}A = A\otimes_{C_{0}(\GG^{(0)}),s}C_{0}(U)\rightarrow\LL(s^{*}|_{U}\EF)$ and $r|_{U}^{*}A = A\otimes_{C_{0}(\GG^{(0)}),r}C_{0}(U)\rightarrow\LL(r^{*}|_{U}\EF)$.
\end{defn}
\end{comment}

\section{$KK^{\GG}$-theory}

We assume from here on that all $\GG$-algebras and $\GG$-Hilbert modules are $\ZB_{2}$-graded, and carry $\GG$-actions that are of degree 0 with respect to the $\ZB_{2}$-grading.  We will moreover assume all Hilbert modules to be countably generated.  Given $\GG$-algebras $A$, $B$, the corresponding $\GG$-actions will be denoted by the corresponding lower-case Greek letters $\alpha$, $\beta$.  Given any $\GG$-Hilbert module $\EE$, the corresponding action on $\LL(\EE)$ given by conjugation will be denoted by $\varepsilon$.

Let us for the rest of this paper fix a countable base $\{U_{i}\}$ for the topology of $\GG$ consisting of locally compact, Hausdorff open subsets, and define
\[
\GG_{\sqcup}:=\bigsqcup_{i}U_{i}.
\]
For each $i$ let $\iota_{i}:U_{i}\hookrightarrow\GG_{\sqcup}$ denote the canonical (continuous) inclusion.  Then $\GG_{\sqcup}$ is a locally compact Hausdorff space.  We equip $\GG_{\sqcup}$ with the continuous maps $r_{\sqcup}:\GG_{\sqcup}\rightarrow\GG^{(0)}$ and $s_{\sqcup}:\GG_{\sqcup}\rightarrow\GG^{(0)}$ defined by
\[
r_{\sqcup}(u,i):=r(u),\hspace{7mm} s_{\sqcup}(u,i):=s(u),\hspace{7mm}(u,i)\in\GG_{\sqcup}.
\]
We will use pullbacks over $\GG_{\sqcup}$ to treat pullbacks over all Hausdorff open subsets of $\GG$ simultaneously.  Note that the action $\alpha$ of $\GG$ on any $\GG$-algebra $A$ induces an isomorphism $\alpha_{\sqcup}:s_{\sqcup}^{*}A\rightarrow r_{\sqcup}^{*}A$ of $C^{*}$-algebras defined by the formula
\[
\alpha_{\sqcup}(a'(u,i)):=\alpha_{u}\big((\iota_{i}^{*}a')(u)\big)\in\AF_{r(u)},\hspace{7mm}a'\in s_{\sqcup}^{*}A.
\]
Similarly, if $\EE$ is a $\GG$-Hilbert module, then the action $\varepsilon$ of $\GG$ by conjugation on $\LL(\EE)$ induces an isomorphism $\varepsilon_{\sqcup}:\LL(s_{\sqcup}^{*}\EE)\rightarrow\LL(r_{\sqcup}^{*}\EE)$ of $C^{*}$-algebras defined by a similar formula.

\begin{comment}
We now present a generalization of the theory in \cite{legall} to the setting of the locally compact, locally Hausdorff groupoids of foliation theory.  We assume from here that all algebras and Hilbert modules are $\ZB_{2}$-graded.  For all Hausdorff open subsets $U\subset \GG$, $C_{0}(U)$ is assumed to be trivially graded.  If $(A.\alpha)$ is any $\GG$-algebra, the isomorphisms $\alpha_{U}:s|_{U}^{*}A\rightarrow r|_{U}^{*}A$ are assumed to preserve the grading on $A$, and for all $\GG$-Hilbert $A$-modules $(E,W)$, the isomorphisms $W_{U}:s|_{U}^{*}E\rightarrow r|_{U}^{*}E$ are assumed to be of degree 0 with respect to the grading on $E$.
\end{comment}

\begin{defn}\label{GEKKdefn}Let $A$ and $B$ be $\GG$-algebras.  A \textbf{$\GG$-equivariant Kasparov $A$-$B$-module} is a tuple $(\EE,F)$, where $\EE$ is a $\GG$-equivariant $A$-$B$ correspondence, and where $F\in\LL(\EE)$ is homogeneous of degree 1 such that for all $a\in A$ one has
	\begin{enumerate}
		\item $a(F-F^{*})\in\KK(\EE)$,
		\item $a(F^{2}-1)\in\KK(\EE)$,
		\item $[F,a]\in\KK(\EE)$, and
		\item $a'\big(\varepsilon_{\sqcup}(s_{\sqcup}^{*}F)-r_{\sqcup}^{*}F\big)\in r_{\sqcup}^{*}\KK(\EE)$ for all $a'\in r_{\sqcup}^{*}A$.
	\end{enumerate}
	We will sometimes refer to item 4. by saying that $F$ is \textbf{almost-equivariant}.  We say that two $\GG$-equivariant Kasparov $A$-$B$-modules $(\EE,F)$ and $(\EE',F')$ are \textbf{unitarily equivalent} if there exists a degree 0 unitary isomorphism $V:\EE\rightarrow \EE'$ of $\GG$-equivaraint $A$-$B$-correspondences for which $VF = F'V$.  We denote by $\EB^{\GG}(A,B)$ the set of all unitary equivalence classes of $\GG$-equivariant Kasparov $A$-$B$-modules.
\end{defn}

\begin{rmk}\normalfont\label{nonequi}
	Notice of course that in the case where $\GG$ is simply a point, $\EB^{\GG}(A,B)$ coincides with the set $\EB(A,B)$ of non-equivariant $KK$-theory \cite[Definition 1]{KasparovTech}.
\end{rmk}

\begin{rmk}\normalfont
	Definition \ref{GEKKdefn} finds its origin in \cite[Section 4.3.4]{iakovos}, although the idea of using a disjoint union to treat locally Hausdorff groupoids dates back at least as early as \cite{koshskand2}.  It differs from that usually seen in the literature (for instance, \cite[Definition 10.10]{tu}, \cite[Definition 4.6]{iakovos}, or \cite[Definition 2.7]{macr1}) in which one replaces item 4. with the requirement that for each Hausdorff open subset $U$ of $\GG$, one has
	\begin{equation}\label{usualrequirement}
	a'\big(\varepsilon_{U}(s|_{U}^{*}F)-r|_{U}^{*}F\big)\in r|_{U}^{*}\KK(\EE)
	\end{equation}
	for all $a'\in r|_{U}^{*}A$.  It is clear that the usual requirement \eqref{usualrequirement} implies item 4. in Definition \ref{GEKKdefn}.  On the other hand, by pulling back via the inclusions $\iota_{i}:U_{i}\hookrightarrow\GG_{\sqcup}$, it is easily checked that any $\GG$-equivariant Kasparov module in the sense of Definition \ref{GEKKdefn} is an equivariant Kasparov module in the sense of \eqref{usualrequirement}.  The advantage of Definition \ref{GEKKdefn} is that it allows us to treat all Hausdorff open subsets of $\GG$ at once, resulting in much cleaner notation and easy extension of the ``classical" proofs to the equivariant context.
\end{rmk}

\begin{comment}
\begin{defn}\label{GEKKdefn}Let $(A,\alpha)$ and $(B,\beta)$ be $\GG$-$C^{*}$-algebras.  A \textbf{$\GG$-equivariant Kasparov $A$-$B$-module} is a triple $(E,\pi,F)$, where $(E,W)$ is a $\GG$-equivariant Hilbert $B$-module carrying an equivariant representation $\pi:A\rightarrow\LL(E)$, and where $F\in\LL(E)$ is homogeneous of degree 1 such that for all $a\in A$ one has
	\begin{enumerate}
		\item $a(F-F^{*})\in\KK(E)$,
		\item $a(F^{2}-1)\in\KK(E)$,
		\item $[F,a]\in\KK(E)$,
	\end{enumerate}
	and such that for all Hausdorff open subsets $U$ of $\GG$ and for all $a'\in r|_{U}^{*}A$ one has
	\begin{enumerate}[resume]
		\item $a'(W_{U}\circ s|_{U}^{*}F\circ W_{U}^{-1} - r|_{U}^{*}F)\in r|_{U}^{*}\KK(E)$.
	\end{enumerate}
	We say that two $\GG$-equivariant Kasparov $A$-$B$-modules $(E,\pi,F)$ and $(E,\pi',F')$ are \textbf{unitarily equivalent} if there exists a $\GG$-equivariant unitary $V:E\rightarrow E'$ of degree 0 such that $VFV^{*} = F'$ and $V\pi(a)V^{*} = \pi'(a)$ for all $a\in A$.  We denote by $\EB^{\GG}(A,B)$ the set of all unitary equivalence classes of $\GG$-equivariant Kasparov $A$-$B$-modules.
\end{defn}
\end{comment}

Notice that if $B$ is a $\GG$-algebra, then the tensor product $B\hotimes C([0,1])$ (with $C([0,1])$ trivially graded) is canonically a $\GG$-algebra - as a $C_{0}(\GG^{(0)})$ algebra, it has fibre $\BF_{x}\hotimes C([0,1])$ over each $x\in\GG^{(0)}$, and the action $\beta$ of $\GG$ on $B$ extends trivially to the $C([0,1])$-factor to give an action on $B\hotimes C([0,1])$.  For $t\in[0,1]$, let $\ev_{t}:C([0,1])\rightarrow\CB$ denote the evaluation functional $\ev_{t}(f):=f(t)$.  Then to each $t\in[0,1]$ and each $\GG$-equivariant Kasparov $A$-$B\hotimes C([0,1])$-module $(\EE,F)$ is associated the $\GG$-equivariant Kasparov $A$-$B$ module $(\ev_{t})_{*}(\EE,F):=(\EE\hotimes_{B\hotimes C([0,1])}B,F\hotimes 1)$, where $B$ is regarded as a left $B\hotimes C([0,1])$-module via the left action $(b\hotimes f)\cdot b':=\ev_{t}(f)bb'$.

\begin{comment}
Now let $(B,\beta)$ be a $\GG$-algebra, and consider the algebra $B\otimes C([0,1])$.  This algebra admits the nondegenerate homomorphism $\cdot\otimes 1_{[0,1]}:C_{0}(X)\rightarrow Z\MM(B\otimes C([0,1]))$, as well as the $\GG$-structure $\tilde{\beta}_{U}:(s|_{U}^{*}B)\otimes C([0,1])\rightarrow (r|_{U}^{*}B)\otimes C([0,1])$ defined for each Hausdorff open subset $U$ of $\GG$ by
\begin{equation}\label{equihomotopy}
\tilde{\beta}_{U}(b\otimes f):=\beta(b)\otimes f,
\end{equation}
for $b\in s|_{U}^{*}B$ and $f\in C([0,1])$.  Moreover, for any $t\in [0,1]$, the evaluation functional $\ev_{t}:f\mapsto f(t)$ on $C([0,1])$ maps $B\otimes C([0,1])$ to $B$ in an equivariant manner.  Homotopy equivalence of $\GG$-equivariant Kasparov modules can now be formulated in essentially the standard way.
\end{comment}

\begin{defn}\label{kk2}
	Two $\GG$-equivariant Kasparov $A$-$B$-modules $(\EE_{0},F_{0})$ and $(\EE_{1},F_{1})$ are said to be $\GG$-\textbf{homotopic}, written $(\EE_{0},F_{0})\sim_{h}(\EE_{1},F_{1})$, if there exists a $\GG$-equivariant Kasparov $A$-$B\hotimes C([0,1])$ module $(\EE,F)$ such that $(\ev_{i})_{*}(\EE,F) = (\EE_{i},F_{i})$ for $i =0, 1$.  The relation $\sim_{h}$ is an equivalence relation, and we denote by $KK^{\GG}(A,B)$ the set of $\sim_{h}$-equivalence classes of elements in $\EB^{\GG}(A,B)$.
\end{defn}

The following analogue of \cite[Lemma 11]{sk1} is essential for the uniqueness and associativity of the Kasparov product at the level of the $KK^{\GG}$ groups.  Its proof in the equivariant setting does not currently appear in the literature.

\begin{lemma}\label{homo}
	Let $A$ and $B$ be $\GG$-algebras, and let $(\EE,F)$, $(\EE,F')\in \EB^{\GG}(A,B)$.  If for all $a\in A$ one has $a[F,F']a^{*}\geq0$ modulo $\KK(\EE)$, then $(\EE,F)$ and $(\EE,F')$ are $\GG$-homotopic.
\end{lemma}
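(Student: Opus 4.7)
The plan is to adapt the classical proof in \cite[Lemma 11]{sk1} to the equivariant setting by constructing an interpolating operator $F_{\bullet} \in \LL(\EE \hotimes C([0,1]))$ which, together with the $\GG$-Hilbert module structure on $\EE \hotimes C([0,1])$, constitutes a $\GG$-equivariant Kasparov $A$-$B \hotimes C([0,1])$-module whose endpoints under $(\ev_{0})_{*}$ and $(\ev_{1})_{*}$ are $(\EE, F)$ and $(\EE, F')$. The natural first candidate is the straight-line path $F_{t} := (1 - t) F + t F'$ for $t \in [0,1]$. Conditions (1), (3), and (4) of Definition \ref{GEKKdefn} are affine in the operator and therefore transfer from $F$ and $F'$ to $F_{t}$ by elementary linearity: for any $a \in A$ and $a' \in r_{\sqcup}^{*} A$, the expressions $a(F_{t} - F_{t}^{*})$, $[F_{t}, a]$, and $a' \bigl(\varepsilon_{\sqcup}(s_{\sqcup}^{*} F_{t}) - r_{\sqcup}^{*} F_{t}\bigr)$ are convex combinations of the corresponding expressions for $F$ and $F'$, each of which lies in $\KK(\EE)$ or $r_{\sqcup}^{*} \KK(\EE)$ by hypothesis.

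The main obstacle is verifying condition (2). Expanding and using $a(F^{2} - 1), a(F'^{2} - 1) \in \KK(\EE)$ one computes
\[
a(F_{t}^{2} - 1) \equiv t(1 - t) \, a \bigl( (FF' + F'F) - 2 \bigr) \pmod{\KK(\EE)},
\]
and the algebraic identity $(F - F')^{2} = F^{2} + F'^{2} - (FF' + F'F)$ further reduces the right-hand side to $-t(1 - t)\, a(F - F')^{2}$ modulo $\KK(\EE)$, which is not automatically compact. Here the positivity hypothesis enters decisively: since $F$ and $F'$ are both of degree $1$, the graded commutator $[F, F']$ equals the anticommutator $FF' + F'F$, so the hypothesis $a[F, F'] a^{*} \geq 0$ mod $\KK(\EE)$ translates, via the same identity, into the bound $a(F - F')^{2} a^{*} \leq 2 a a^{*}$ modulo $\KK(\EE)$. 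Following Skandalis, I would then replace the straight-line path by a normalized path of the form
\[
\widetilde{F}_{t} := \bigl((1 - t) F + t F'\bigr) \cdot h_{t},
\]
where $h_{t} \in \LL(\EE)$ is a positive operator obtained by applying a continuous functional calculus to an expression built from $(F - F')^{2}$ and the identity, chosen precisely so that the positivity-bounded defect $-t(1-t)\,a(F-F')^{2}$ is absorbed into a compact perturbation of $\widetilde{F}_{t}^{2} - 1$.

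The $\GG$-equivariance of $\widetilde{F}_{t}$ (condition (4) at each $t$) is preserved because the operator on which the functional calculus is performed is itself almost-equivariant, as a polynomial in the almost-equivariant operators $F$ and $F'$, and because $\varepsilon_{\sqcup}$ commutes with continuous functional calculus modulo $r_{\sqcup}^{*}\KK(\EE)$. Continuity of $\widetilde{F}_{\bullet}$ in $t$ then guarantees that $(\EE \hotimes C([0,1]), \widetilde{F}_{\bullet})$ is a genuine element of $\EB^{\GG}(A, B \hotimes C([0,1]))$, yielding the desired $\GG$-homotopy. The hardest step is pinning down the precise normalizing factor $h_{t}$ and verifying that its functional calculus is simultaneously compact-correcting and almost-equivariant; the positivity hypothesis is the exact condition required to make both features compatible.
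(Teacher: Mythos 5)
Your overall strategy is the right one and is essentially the paper's: take Skandalis's path from \cite[Lemma 11]{sk1}, i.e.\ a linear interpolation corrected by an inverse-square-root normalizing factor built from the positive part of the graded commutator $[F,F']$, and observe that the only genuinely new thing to check in the equivariant setting is condition (4) for the normalized path. Your identification of the defect in condition (2) as $-t(1-t)\,a(F-F')^{2}$ and of the role of the positivity hypothesis is correct. (One small discrepancy: the normalizing factor should be built from the positive operator $P$ in the Skandalis decomposition $[F,F'] = P + K$, with $K$ locally compact and $P$ commuting with $A$ modulo $\KK(\EE)$ --- the paper takes $(1+\cos(t)\sin(t)P)^{-\frac{1}{2}}$ --- rather than from $(F-F')^{2}$, which agrees with $2-[F,F']$ only after compression by $a$ and modulo compacts.)

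The genuine gap is at exactly the step that constitutes the content of this lemma. You dispose of the almost-equivariance of the normalized path by asserting that ``$\varepsilon_{\sqcup}$ commutes with continuous functional calculus modulo $r_{\sqcup}^{*}\KK(\EE)$.'' This is not a general fact, and it is precisely what must be proved. Note first that $\varepsilon_{\sqcup}$ is an honest $*$-isomorphism $\LL(s_{\sqcup}^{*}\EE)\rightarrow\LL(r_{\sqcup}^{*}\EE)$, so it commutes with functional calculus \emph{exactly}; the real issue is different. What one can extract from the hypotheses is only that $a\big(\varepsilon_{\sqcup}(s_{\sqcup}^{*}P)-r_{\sqcup}^{*}P\big)\in r_{\sqcup}^{*}\KK(\EE)$ (itself requiring a computation expanding $[F,F']$ and using the almost-equivariance of $F$ and $F'$ together with their compact commutators with $A$). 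What is needed is the same statement with $P$ replaced by $f(P)$ for $f(x)=(1+c(t)x)^{-\frac{1}{2}}$, i.e.\ that two positive operators whose difference becomes locally compact after multiplication by $a$ have inverse square roots with the same property. This fails to be formal: even for polynomials it needs commutator control, and for $f$ as above it needs more. The paper closes this gap with an explicit integral argument: writing $(1+c(t)Q)^{-\frac{1}{2}}$ via the Laplace transform as $\frac{1}{\sqrt{\pi}}\int_{0}^{\infty}\lambda^{-\frac{1}{2}}e^{-\lambda(1+c(t)Q)}\,d\lambda$ and applying the fundamental theorem of calculus along the segment joining $\varepsilon_{\sqcup}(s_{\sqcup}^{*}P)$ and $r_{\sqcup}^{*}P$, one exhibits $\varepsilon_{\sqcup}(s_{\sqcup}^{*}F_{t1})-r_{\sqcup}^{*}F_{t1}$ as $\big(\varepsilon_{\sqcup}(s_{\sqcup}^{*}P)-r_{\sqcup}^{*}P\big)$ times a norm-convergent integral of bounded operators, so that multiplication by $a$ lands in $r_{\sqcup}^{*}\KK(\EE)$. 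Since your proposal leaves both the precise normalizing factor and this functional-calculus step open --- and explicitly flags them as the hardest part --- it does not yet prove the one assertion the lemma exists to supply.
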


\begin{proof}
	By the arguments of \cite[Lemma 11]{sk1}, we obtain a positive operator $P\geq0$ for which $[P,a]\in\KK(\EE)$ for all $a\in A$, and an operator $K$ for which $Ka\in\KK(\EE)$ for all $a\in A$, such that
	\[
	[F,F'] = P+K.
	\]
	As in \cite[Lemma 11]{sk1}, for each $t\in[0,\pi/2]$ we define the operator
	\[
	F_{t}:=(1+\cos(t)\sin(t)P)^{-\frac{1}{2}}(\cos(t)F+\sin(t)F'),
	\]
	and the pair $(\EE,F_{t})$ satisfies items 1., 2., and 3. of Definition \ref{GEKKdefn}.  All that remains to check to complete the proof of the lemma is that each $F_{t}$ is almost-equivariant.
	
	For $t\in[0,\pi/2]$, write $F_{t1}:=(1+\cos(t)\sin(t)P)^{-\frac{1}{2}}$, and $F_{t2} = (\cos(t)F+\sin(t)F')$.  Note then that for $a\in r_{\sqcup}^{*}A$, we can write
	\begin{equation}\label{eqnsk}
	a\varepsilon_{\sqcup}(s_{\sqcup}^{*}F_{t})-r_{\sqcup}^{*}F_{t} = a\big(\varepsilon_{\sqcup}(s_{\sqcup}^{*}F_{t1})-r_{\sqcup}^{*}F_{t1}\big)\varepsilon_{\sqcup}(s_{\sqcup}^{*}F_{t2})+ar_{\sqcup}^{*}(F_{t1})\big(\varepsilon_{\sqcup}(s_{\sqcup}^{*}F_{t2})-r_{\sqcup}^{*}F_{t2}\big).
	\end{equation}
	The second of these terms is contained in $\KK(\EE)$ by the almost-equivariance of $F_{1}$ and $F_{2}$, together with the fact that $P$ commutes with $A$ up to $\KK(\EE)$.
	
	To show that the first term is contained in $\KK(\EE)$, observe first that, modulo $\KK(\EE)$, we have
	\begin{align*}
	a(\varepsilon_{\sqcup}(s_{\sqcup}^{*}P)-r^{*}_{\sqcup}P) =& a(\varepsilon_{\sqcup}(s_{\sqcup}^{*}[F,F'])-r_{\sqcup}^{*}[F,F'])\\ =& a\big((\varepsilon_{\sqcup}(s_{\sqcup}^{*}F)-r_{\sqcup}^{*}F)\varepsilon_{\sqcup}(s_{\sqcup}^{*}F')+r_{\sqcup}^{*}F\,(\varepsilon_{\sqcup}(s_{\sqcup}^{*}F')-r_{\sqcup}^{*}F')\\ &+ (\varepsilon_{\sqcup}(s_{\sqcup}^{*}F')-r_{\sqcup}^{*}F')\varepsilon_{\sqcup}(s_{\sqcup}^{*}F)+r_{\sqcup}^{*}F'\,(\varepsilon_{\sqcup}(s_{\sqcup}^{*}F)-r_{\sqcup}^{*}F)\big)
	\end{align*}
	which is contained in $\KK(\EE)$ by the equivariance properties of $F$ and $F'$ together with the fact that both $F$ and $F'$ commute up to $\KK(\EE)$ with $A$.  That this implies that the first term in Equation \eqref{eqnsk} is contained in $\KK(\EE)$ now follows from an elegant integral argument, which was brought to our attention by A. Rennie.
	
	Denote $\cos(t)\sin(t)$ by $c(t)$.  Using the Laplace transform, we have the norm-convergent integral formula
	\[
	\varepsilon_{\sqcup}(s_{\sqcup}^{*}F_{t1})-r_{\sqcup}^{*}F_{t1} = \frac{1}{\sqrt{\pi}}\int_{0}^{\infty}\lambda^{-\frac{1}{2}}\big(\exp\big(-\lambda(1+c(t)\varepsilon_{\sqcup}(s_{\sqcup}^{*}P))\big)-\exp\big(-\lambda\big(1+c(t)r_{\sqcup}^{*}P)\big)d\lambda.
	\]
	By the fundamental theorem of calculus, this becomes
	\[
	\frac{1}{\sqrt{\pi}}\int_{0}^{\infty}\lambda^{-\frac{1}{2}}\int_{0}^{1}\frac{d}{ds}\exp\big(-\lambda\big(1+c(t)(s\varepsilon_{\sqcup}(s_{\sqcup}^{*}P)+(1-s)r_{\sqcup}^{*}P))\big)ds\,d\lambda,
	\]
	which we easily compute to be
	\[
	-\frac{c(t)}{\sqrt{\pi}}(\varepsilon_{\sqcup}(s_{\sqcup}^{*}P)-r_{\sqcup}^{*}P)\int_{0}^{\infty}\lambda^{\frac{1}{2}}\int_{0}^{1}\exp\big(-\lambda(1+c(t)(s\varepsilon_{\sqcup}(s_{\sqcup}^{*}P)+(1-s)r_{\sqcup}^{*}P)\big)\,ds\,d\lambda.
	\]
	Since $a(\varepsilon_{\sqcup}(s_{\sqcup}^{*}P)-r^{*}_{\sqcup}P)\in\KK(\EE)$, it follows that the first term in Equation \eqref{eqnsk} is contained in $\KK(\EE)$, hence that $(\EE,F_{t})\in\EB^{\GG}(A,B)$ as required.
\end{proof}

As we would expect, $KK^{\GG}(A,B)$ is indeed an abelian group for all $\GG$-algebras $A$ and $B$.  In order to prove this, we must define the appropriate notion of degeneracy for our setting.

\begin{defn}\label{degen}
	An element $(\EE,F)$ of $\EB^{\GG}(A,B)$ is said to be \textbf{degenerate} if for all $a\in A$ one has
	\begin{enumerate}
		\item $a(F^{2}-1) = 0$,
		\item $[F,a] = 0$, and
		\item $\varepsilon_{\sqcup}(s_{\sqcup}^{*}F)-r_{\sqcup}^{*}F = 0$.
	\end{enumerate}
\end{defn}

\begin{comment}
\begin{defn}\label{degen}
	An element $(E,\pi,F)$ of $\EB^{\GG}(A,B)$ is said to be \textbf{degenerate} if for all $a\in A$ one has
	\begin{enumerate}
		\item $a(F^{2}-1) = 0$,
		\item $[F,a] = 0$,
	\end{enumerate}
	and if for all Hausdorff open subsets $U$ of $\GG$ one has
	\begin{enumerate}[resume]
		\item $W_{U}\circ s|_{U}^{*}F\circ W_{U}^{-1} = r|_{U}^{*}F$.
	\end{enumerate}
\end{defn}
\end{comment}

For $\GG$-algebras $A$ and $B$, we define the sum $(\EE\oplus\EE',F\oplus F')$ of two elements $(\EE,F)$ and $(\EE',F')$ in $\EB^{\GG}(A,B)$ in the same way as in \cite[Definition 3]{KasparovTech}.  The natural operator
\[
F\oplus F':=\begin{pmatrix} F & 0 \\ 0 & F' \end{pmatrix}\in\LL(\EE\oplus\EE')
\]
satisfies item 4. of Definition \ref{GEKKdefn} by the corresponding properties of $F$ and $F'$, so that $(\EE\oplus\EE',F\oplus F')\in\EB^{\GG}(A,B)$.

\begin{prop}
	Under the sum of equivariant Kasparov modules, $KK^{\GG}(A,B)$ is an abelian group.
\end{prop}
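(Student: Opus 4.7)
The plan is to follow the classical proof, which adapts cleanly to the equivariant setting given Lemma \ref{homo}. I would proceed in three steps: well-definedness of the sum on homotopy classes, verification that degenerate modules represent $0$, and construction of inverses. For well-definedness, if $(\mathcal{H}, H) \in \EB^{\GG}(A, B \hotimes C([0,1]))$ is a homotopy from $(\EE_0, F_0)$ to $(\EE_0', F_0')$, then its direct sum with the ``constant'' module $(\EE_1 \hotimes_B (B\hotimes C([0,1])), F_1\hotimes 1)$ realises a homotopy from $(\EE_0 \oplus \EE_1, F_0 \oplus F_1)$ to $(\EE_0' \oplus \EE_1, F_0' \oplus F_1)$, so $+$ descends to $KK^{\GG}(A,B)$. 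Commutativity and associativity then follow from the corresponding properties of direct sum up to unitary equivalence, which is a special case of $\sim_h$.

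Next, I would show that any degenerate module $(\EE_0, F_0)$ is homotopic to $0$ via $(\EE_0 \hotimes_B (B \hotimes C_0((0,1])), F_0 \hotimes 1)$, whose evaluations at $0$ and $1$ are the zero module and $(\EE_0, F_0)$ respectively; the Kasparov conditions including almost-equivariance hold as equalities because of the strict equivariance of $F_0$ demanded by item 3 of Definition \ref{degen}. This identifies the class of any degenerate module with the additive identity.

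For inverses, the candidate inverse of $[(\EE, F)]$ is the class of $(\EE^{op}, -F)$, where $\EE^{op}$ denotes $\EE$ with its $\ZB_{2}$-grading reversed (with the left $A$-action and $\GG$-structure adjusted in the standard way so that $(\EE^{op},-F) \in \EB^{\GG}(A,B)$). I would then consider the degree-$1$ ``swap'' operator $\sigma = \bigl(\begin{smallmatrix} 0 & 1 \\ 1 & 0 \end{smallmatrix}\bigr)$ on $\EE \oplus \EE^{op}$. Because the $\GG$-action and the left $A$-action on the direct sum are diagonal, $\sigma$ is strictly $\GG$-equivariant, self-adjoint, squares to the identity, and commutes strictly with $A$; hence $(\EE\oplus\EE^{op}, \sigma)$ is degenerate and represents $0$ by the previous step. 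A direct block computation shows that the graded commutator $[F\oplus -F, \sigma]$ vanishes identically, so $a[F\oplus -F, \sigma]a^* = 0$ modulo $\KK(\EE\oplus\EE^{op})$, and Lemma \ref{homo} yields $(\EE\oplus\EE^{op}, F\oplus -F) \sim_h (\EE\oplus\EE^{op}, \sigma) \sim_h 0$.

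The main point requiring care, rather than a serious obstacle, is the bookkeeping of equivariance for each auxiliary Kasparov module constructed along the way. In each case the required almost-equivariance is in fact strict, inherited from the diagonal nature of the $\GG$-action on direct sums and tensor products; once this is verified, Lemma \ref{homo} supplies the only genuinely nontrivial input.
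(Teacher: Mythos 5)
Your proof is correct, and it is organised somewhat differently from the paper's, which consists of a single sentence: one appeals to the explicit rotation homotopy of \cite[Theorem 1]{KasparovTech} from $(\EE\oplus(-\EE),F\oplus(-F))$ to a degenerate module and observes that it is a $\GG$-homotopy because $F$ is almost-equivariant. You instead (i) spell out the well-definedness of the sum on homotopy classes and the null-homotopy of degenerate modules via the cone over $C_{0}((0,1])$, which the paper treats as routine, and (ii) obtain the inverse by applying Lemma \ref{homo} to $F\oplus(-F)$ and the swap $\sigma$. These routes in fact produce the same path: since the graded commutator $[F\oplus(-F),\sigma]$ vanishes, the operator $F_{t}$ built in the proof of Lemma \ref{homo} reduces (with $P=K=0$) to $\cos(t)\,(F\oplus(-F))+\sin(t)\,\sigma$, which is exactly Kasparov's rotation homotopy. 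Your packaging has the advantage of reusing a lemma already established in the equivariant setting rather than re-verifying almost-equivariance of an explicit formula, at the cost of the (easy) extra check that $(\EE\oplus\EE^{\mathrm{op}},\sigma)$ is degenerate. One bookkeeping point: for $\sigma$ to graded-commute with the \emph{odd} part of $A$, the left action on $\EE^{\mathrm{op}}$ must be the grading-twisted one $a\mapsto(-1)^{\deg a}\phi(a)$, so the action on $\EE\oplus\EE^{\mathrm{op}}$ is not literally diagonal; your phrase ``adjusted in the standard way'' covers this, but the justification by diagonality is only accurate on the even part of $A$ (and verbatim correct when $A$ is trivially graded).
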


\begin{proof}
	For $(\EE,F)\in\EB^{\GG}(A,B)$, the explicit homotopy given in \cite[Theorem 1]{KasparovTech} from $(\EE\oplus(-\EE),F\oplus(-F))$ to a degenerate module is (up to a reparameterisation of $[0,\pi/2]$ onto $[0,1]$) easily seen to be a $\GG$-homotopy by the almost-equivariance of $F$.
\end{proof}

Functoriality of the equivariant Kasparov groups goes through just as in the classical case.

\begin{prop}
	Let $A$, $B$, and $C$ be $\GG$-algebras.
	\begin{enumerate}
		\item any $\GG$-homomorphism $\phi:A\rightarrow C$ determines a homomorphism
		\[
		\phi^{*}:KK^{\GG}(C,B)\rightarrow KK^{\GG}(A,B),
		\]
		of abelian groups,
		\item any $\GG$-homomorphism $\psi:B\rightarrow C$ determines a homomorphism
		\[
		\psi_{*}:KK^{\GG}(A,B)\rightarrow KK^{\GG}(A,C)
		\]
		of abelian groups, and
		\item any homomorphism $\varphi:\HH\rightarrow\GG$ of locally compact, second-countable, locally Hausdorff groupoids determines a homomorphism
		\[
		\varphi^{*}:KK^{\GG}(A,B)\rightarrow KK^{\HH}(\varphi^{*}A,\varphi^{*}B)
		\]
		of abelian groups, where $\varphi^{*}A = \Gamma_{0}(\HH^{(0)};\varphi^{*}\AF)$ and $\varphi^{*}B = \Gamma_{0}(\HH^{(0)};\varphi^{*}\BF)$ are regarded as $\HH$-algebras using $\varphi$ in the obvious way. 
	\end{enumerate}
\end{prop}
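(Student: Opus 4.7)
The plan is to define each of the three maps at the level of $\GG$-equivariant Kasparov modules, verify that the resulting data satisfies axioms 1--4 of Definition \ref{GEKKdefn}, and then check that the constructions descend to homotopy classes and respect the group operation. For (1), given $(\EE,F)\in\EB^{\GG}(C,B)$, I would set $\phi^{*}(\EE,F):=(\EE,F)$, where the $A$-action on $\EE$ is obtained by precomposing the $C$-action with $\phi$; the Hilbert $B$-module and $\GG$-structures are unchanged. For (2), given $(\EE,F)\in\EB^{\GG}(A,B)$, I would form $\psi_{*}(\EE,F):=(\EE\hotimes_{\psi}C,\,F\hotimes 1)$, where $C$ is a left $B$-module via $\psi$ and the $\GG$-structure on the tensor product is the tensor product of those of $\EE$ and of $C$; this is well-defined on the balanced tensor product precisely because $\psi$ is a $\GG$-homomorphism. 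For (3), I would set $\varphi^{*}(\EE,F):=(\varphi^{*}\EE,\varphi^{*}F)$ using the operator pullback of Definition \ref{operatorpullback}, equipping $\varphi^{*}\EE$ with the $\HH$-structure defined fibrewise by $V_{v}:=W_{\varphi(v)}$ for $v\in\HH$, which is a genuine $\HH$-action because $\varphi$ respects source, range, multiplication and inversion.

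Verifying axioms 1--3 of Definition \ref{GEKKdefn} is then routine in each case. In (1) they hold because the elements $\phi(a)\in C$ already satisfied the original conditions. In (2) they transfer through the balanced tensor product using $(F\hotimes 1)^{2}-1=(F^{2}-1)\hotimes 1$ and $[F\hotimes 1,a]=[F,a]\hotimes 1$ for $a\in A$, together with the fact that tensoring with $1_{C}$ sends $\KK(\EE)$ into $\KK(\EE\hotimes_{\psi}C)$. In (3) I would use that the pullback is a $*$-homomorphism on bounded operators, along with the canonical fibrewise identification $\KK(\varphi^{*}\EF)\cong\varphi^{*}\KK(\EF)$ which follows from the $C_{0}(\HH^{(0)})$-algebra theory summarised earlier.

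The main obstacle in each case is axiom 4. For (1) it is immediate, since $r_{\sqcup}^{*}\phi$ sends $r_{\sqcup}^{*}A$ into $r_{\sqcup}^{*}C$ while the defect $\varepsilon_{\sqcup}(s_{\sqcup}^{*}F)-r_{\sqcup}^{*}F$ is unchanged. For (2) I would compute that the defect on $\EE\hotimes_{\psi}C$ equals $(\varepsilon_{\sqcup}(s_{\sqcup}^{*}F)-r_{\sqcup}^{*}F)\hotimes 1$, using crucially that $\psi$ is a $\GG$-homomorphism so that the $\GG$-action on the image of $B$ in $C$ is compatible under conjugation; multiplying by $a'\in r_{\sqcup}^{*}A$ then lands in $r_{\sqcup}^{*}\KK(\EE)\hotimes 1\subset r_{\sqcup}^{*}\KK(\EE\hotimes_{\psi}C)$. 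For (3) the subtlety is that the disjoint-union base for $\HH$ is fixed independently of $\GG_{\sqcup}$, so I would invoke the equivalence established in the remarks following Definition \ref{GEKKdefn} between axiom 4 and the requirement of almost-equivariance over every Hausdorff open subset. Given a Hausdorff open $U\subset\HH$, continuity of $\varphi$ allows me to cover $\varphi(U)$ by Hausdorff open subsets of $\GG$, on each of which the original almost-equivariance of $F$ applies; the pullback conventions of Remark \ref{rmkpullbax} then glue these local conclusions to the required almost-equivariance statement over $U$.

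To descend to $KK^{\GG}$ and obtain group homomorphisms, I would finally observe that each of the three constructions commutes with evaluation at $t\in[0,1]$ and hence carries $\GG$-homotopies to $\GG$- (respectively $\HH$-) homotopies; additivity on direct sums follows because each operation is defined fibrewise and commutes with the block-diagonal operator $F\oplus F'$, yielding the group-homomorphism property.
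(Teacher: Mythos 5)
Your proposal is correct and follows essentially the same route as the paper, which simply defines $\phi^{*}$ and $\psi_{*}$ at the level of Kasparov modules as in Kasparov's Definition 4, notes that equivariance of $\phi$ and $\psi$ makes the verification routine, and sets $\varphi^{*}(\EE,F)=(\Gamma_{0}(\HH^{(0)};\varphi^{*}\EF),\varphi^{*}F)$ for the third item. You supply considerably more detail than the paper's own proof (in particular for axiom 4 of Definition \ref{GEKKdefn} in cases (2) and (3), where your local-cover-and-glue argument for $\varphi^{*}$ is the right way to reconcile the fixed disjoint-union bases for $\HH$ and $\GG$), but the constructions and verifications are the standard ones the paper intends.
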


\begin{proof}
	The maps $\phi^{*}$ and $\psi_{*}$ are defined at the level of Kasparov modules just as in \cite[Definition 4]{KasparovTech}.  That they send equivariant Kasparov modules to equivariant Kasparov modules follows in a routine manner from the equivariance of the maps $\phi$ and $\psi$.  For the third item, given $(\EE,F)\in\EB^{\GG}(A,B)$ we note that $\varphi^{*}(\EE,F):=(\Gamma_{0}(\HH^{(0)};\varphi^{*}\EF),\varphi^{*}F)$ defines an element of $\EB^{\HH}(\varphi^{*}A,\varphi^{*}B)$ because $\varphi$ is a homomorphism, and $(\EE,F)\mapsto\varphi^{*}(\EE,F)$ determines the stated map $\varphi^{*}$ on $KK$-groups.
\end{proof}

We end the section with a final definition and result on unbounded representatives, which are slight modifications of those found in \cite{pierrot}.  For a $\GG$-algebra $A$, we say that a (not necessarily norm-closed) subalgebra $\AA\subset A$ is a \emph{$*$-$\GG$-subalgebra} if there exists some subspace $\AS\subset \AF$ which is preserved by the action of $\GG$, for which $\AS_{x}$ is a subalgebra of $\AF_{x}$, and for which $\AA$ can be written as a subalgebra of $\Gamma_{0}(\GG^{(0)};\AF)$ whose elements take values in $\AS$.  For instance, when $\GG$ is a Lie groupoid and $P\rightarrow\GG^{(0)}$ is a smooth submersion carrying a smooth $\GG$-action, one sees that $A = C_{0}(P)$ is a $\GG$-algebra, whose fibre over $x\in\GG^{(0)}$ is $C_{0}(P_{x})$.  In this case, $\AA:=C_{0}^{\infty}(P)$ is a $*$-$\GG$-subalgebra of $A$, with corresponding fibre $\AS_{x}:=C_{0}^{\infty}(P_{x})\subset C_{0}(P_{x})$ for each $x\in X$.

\begin{defn}\label{unbdd}
	Let $A$ and $B$ be $\GG$-algebras.  An \textbf{unbounded $\GG$-equivariant Kasparov $A$-$B$-module} is a triple $(\AA,\EE,D)$, where $\AA$ is a dense $*$-$\GG$-subalgebra of $A$, $\EE$ is a $\GG$-equivariant $A$-$B$-correspondence, and where $D$ is a densely defined, $B$-linear, self-adjoint and regular operator on $\EE$ of degree 1 such that
	\begin{enumerate}
		\item $a(1+D^{2})^{-\frac{1}{2}}\in\KK(E)$ for all $a\in A$,
		\item for all $a\in\AA$, the operator $[D,a]$ extends to an element of $\LL(E)$, and for $f\in C_{c}(\GG_{\sqcup})$ one has
		\[
		f\,r_{\sqcup}^{*}a\,\big(\varepsilon_{\sqcup}(s_{\sqcup}^{*}D)-r_{\sqcup}^{*}D\big)\in\LL(r_{\sqcup}^{*}\EE),
		\]
		and
		\item for all $f\in C_{c}(\GG_{\sqcup})$, one has $\dom(r_{\sqcup}^{*}D\,f) = W_{\sqcup}\dom(s_{\sqcup}^{*}D \,f)$, where $W_{\sqcup}:s_{\sqcup}^{*}\EE\rightarrow r_{\sqcup}^{*}\EE$ is the isometric isomorphism of Banach spaces induced by the action of $\GG$ on $\EE$.
	\end{enumerate}
\end{defn}

\begin{comment}
\begin{defn}\label{unbdd}
	Let $A$ and $B$ be $\GG$-algebras.  An \textbf{unbounded equivariant Kasparov $A$-$B$-module} is a triple $(\AA,{}_{\pi}E,D)$, where $\AA$ is a dense $*$-$\GG$-subalgebra of $A$, $(E,W)$ is a $\GG$-Hilbert $B$-module carrying an equivariant representation $\pi:A\rightarrow\LL(E)$, and where $D$ is a densely defined, self-adjoint and regular operator on $E$ of degree 1 such that
	\begin{enumerate}
		\item $\pi(a)(1+D^{2})^{-\frac{1}{2}}\in\KK(E)$ for all $a\in A$,
		\item for all $a\in\AA$, the operator $[D,a]$ extends to an element of $\LL(E)$, and for all Hausdorff open subsets $U$ of $\GG$ and all $f\in C_{c}(U)$ one has
		\[
		f\circ r|_{U}^{*}a\circ(W_{U}\circ s|_{U}^{*}D\circ W_{U}^{-1}-r|_{U}^{*}D)\in\LL(r|_{U}^{*}E)
		\]
		and
		\[
		f\circ s|_{U}^{*}a\circ(W_{U}^{-1}\circ r|_{U}^{*}D\circ W_{U}-s|_{U}^{*}D)\in\LL(s|_{U}^{*}E),
		\]
		\item for all Hausdorff open subsets $U$ of $\GG$ and for all $f\in C_{c}(U)$, one has $\dom(r|_{U}^{*}D\circ f) = W_{U}\dom(s|_{U}^{*}D\circ f)$.
	\end{enumerate}
\end{defn}
\end{comment}

\begin{prop}
	Let $A$ and $B$ be $\GG$-algebras.  Every unbounded equivariant Kasparov $A$-$B$-module $(\AA,\EE,D)$ determines a $\GG$-equivariant Kasparov $A$-$B$-module $(\EE,D(1+D^{2})^{-\frac{1}{2}})$, defining a class $[D]\in KK^{\GG}(A,B)$.
\end{prop}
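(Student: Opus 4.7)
The plan is to verify that $F := D(1+D^{2})^{-1/2}$ satisfies each of the four conditions in Definition \ref{GEKKdefn}. Conditions 1--3 are essentially non-equivariant in character. Self-adjointness of $D$ gives $F = F^{*}$, so condition 1 is immediate; condition 2 follows by factoring $F^{2} - 1 = -(1+D^{2})^{-1/2}(1+D^{2})^{-1/2}$ and invoking item 1 of Definition \ref{unbdd}; and condition 3 is the classical Baaj--Julg argument \cite{baajjulg1}, which uses the integral representation
\[
F = \frac{1}{\pi}\int_{0}^{\infty} \lambda^{-1/2}\, D(1+\lambda+D^{2})^{-1}\, d\lambda
\]
to express $[F,a]$ for $a\in\AA$ as an integral of bounded products involving $[D,a]$ and resolvent factors, then extends to all $a \in A$ by density of $\AA$ in $A$ together with closedness of $\KK(\EE)$ under norm limits.

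The substantive new content is condition 4 (almost-equivariance). Abbreviating $D_{s} := \varepsilon_{\sqcup}(s_{\sqcup}^{*}D)$, $D_{r} := r_{\sqcup}^{*}D$ and $R_{\bullet} := (1+\lambda+D_{\bullet}^{2})^{-1}$, condition 3 of Definition \ref{unbdd} ensures that the above integral formula can be pulled back and conjugated fibrewise, giving
\[
a'\bigl(\varepsilon_{\sqcup}(s_{\sqcup}^{*}F) - r_{\sqcup}^{*}F\bigr) = \frac{1}{\pi}\int_{0}^{\infty} \lambda^{-1/2}\, a'\bigl(D_{s}R_{s} - D_{r}R_{r}\bigr)\, d\lambda
\]
for $a' = f\cdot r_{\sqcup}^{*}a$ with $a\in\AA$ and $f\in C_{c}(\GG_{\sqcup})$. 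Employing the resolvent decomposition
\[
D_{s}R_{s} - D_{r}R_{r} = (D_{s}-D_{r})R_{s} + D_{r}R_{s}\bigl(D_{r}(D_{r}-D_{s}) + (D_{r}-D_{s})D_{s}\bigr)R_{r},
\]
I would rearrange each resulting term so that the unbounded factor $D_{s}-D_{r}$ sits adjacent to $a'$ (extending to a bounded operator by item 2 of Definition \ref{unbdd}), while an adjacent factor of the form $r_{\sqcup}^{*}a\cdot R_{r}$ or $R_{s}\cdot r_{\sqcup}^{*}a$ delivers compactness via item 1 of Definition \ref{unbdd} together with the identification $\KK(r_{\sqcup}^{*}\EE)\cong r_{\sqcup}^{*}\KK(\EE)$. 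Commuting $r_{\sqcup}^{*}a$ past the unbounded operators $D_{s}$ and $D_{r}$ produces bounded correction terms involving the commutators $[D,a]$ pulled back along $s$ and $r$, which are controlled since $a\in \AA$.

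The principal obstacle will be the bookkeeping needed to carry out this rearrangement while keeping the norm of the integrand integrable against the $\lambda^{-1/2}$ weight. Standard resolvent estimates $\|R_{\bullet}\|\leq (1+\lambda)^{-1}$ and $\|D_{\bullet}R_{\bullet}\|\leq (1+\lambda)^{-1/2}$, combined with the boundedness of the commutator corrections, yield integrand decay sufficient for absolute convergence of the integral in $\KK(r_{\sqcup}^{*}\EE)$; this parallels Pierrot's treatment \cite{pierrot} in the Hausdorff case. Finally, density of $\{f\cdot r_{\sqcup}^{*}a : a\in\AA,\, f\in C_{c}(\GG_{\sqcup})\}$ in $r_{\sqcup}^{*}A$ and norm continuity of the map $a'\mapsto a'(\varepsilon_{\sqcup}(s_{\sqcup}^{*}F) - r_{\sqcup}^{*}F)$ extend condition 4 to arbitrary $a'\in r_{\sqcup}^{*}A$, showing that $(\EE, F)\in\EB^{\GG}(A,B)$ and hence that $[F]$ defines a class in $KK^{\GG}(A,B)$.
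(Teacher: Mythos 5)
Your proposal is correct and follows essentially the same route as the paper: items 1--3 of Definition \ref{GEKKdefn} via the Baaj--Julg integral formula \cite{baajjulg1}, and almost-equivariance via the resolvent-decomposition argument of Pierrot \cite{pierrot} applied with $\GG_{\sqcup}$ in place of his $G$. The paper simply cites \cite[Th\'eor\`eme 6]{pierrot} for item 4, whereas you unpack that argument; the details you sketch (domain compatibility from item 3 of Definition \ref{unbdd}, integrability against the $\lambda^{-1/2}$ weight, and the density/continuity extension to all of $r_{\sqcup}^{*}A$) are the right ones.
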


\begin{proof}
	By the same arguments as in the non-equivariant case \cite{baajjulg1}, the pair $(\EE,D(1+D^{2})^{-\frac{1}{2}})$ satisfies items 1., 2. and 3. of Definition \ref{GEKKdefn}.  The final item 4. in Definition \ref{GEKKdefn} follows from \cite[Th\'{e}or\`{e}me 6]{pierrot}, with Pierrot's $G$ replaced with our $\GG_{\sqcup}$.
\end{proof}

\section{The Kasparov product}

The characteristic feature of $KK$-theory is the Kasparov product.  In \cite{legall}, it is proved that when $\GG$ is Hausdorff, for all $\GG$-algebras $A$, $D$, $B$ there is an associative and non-trivial product
\[
KK^{\GG}(A,D)\otimes KK^{\GG}(D,B)\rightarrow KK^{\GG}(A,B).
\]
The same is true when $\GG$ is locally Hausdorff.  In fact, using the ideas of \cite[Section 4.3.4]{iakovos}, this can be seen by direct substitution of concepts into the proofs used in \cite{KasparovEqvar}.  We first give the groupoid-equivariant version of the the lemma \cite[Lemma 1.4]{KasparovEqvar} on the existence of quasi-invariant, quasi-central approximate units.

\begin{lemma}[Existence of quasi-central approximate units]\label{qau}
Let $B$ be a $\GG$-algebra, $A$ a $\sigma$-unital $\GG$-subalgebra of $B$, $Y$ a $\sigma$-compact, locally compact Hausdorff space, and $\varphi:Y\rightarrow B$ a function satisfying:
	\begin{enumerate}
		\item $[\varphi(y),a]\in A$ for all $a\in A$ and $y\in Y$, and
		\item for all $a\in A$, the functions $y\mapsto[\varphi(y),a]$ are norm-continuous on $Y$.
	\end{enumerate}
Then there is a countable approximate unit $\{e_{i}\}$ for $A$ with the following properties:
	\begin{enumerate}
		\item $\lim_{i\rightarrow\infty}\|[e_{i},\varphi(y)]\| = 0$ for all $y\in Y$, and
		\item $\lim_{i\rightarrow\infty}\|\alpha_{\sqcup}(s_{\sqcup}^{*}e_{i})-r_{\sqcup}^{*}e_{i}\| = 0$ in $r_{\sqcup}^{*}A$.
	\end{enumerate}
\begin{comment}
	Let $(J,\alpha)$ be a $\ZB_{2}$-graded $\GG$-algebra.  Let $\{U_{i}\}_{i\in\NB}$ be a countable base for the topology of $\GG$.  Since $\GG$ is locally Hausdorff, every $U_{i}$ must be Hausdorff.  For each $i$ choose $\varphi_{i}\in C_{0}(U_{i})$ and let $h_{i}\in r|_{U_{i}}^{*}J$.  Then, for all $h_{0},h\in J$ with $h_{0}\geq 0$ of degree 0 and $\|h_{0}\|\leq 1$, for all strictly compact subsets $K$ of $\MM(J)$ and for all $\epsilon >0$, there exists an element $u\in J$ of degree 0 such that
	\begin{enumerate}
		\item $h_{0}\leq u$, $\|u\|\leq 1$,
		\item $\|uh- h\|\leq\epsilon$,
		\item for all $d\in K$, $\|[d,u]\|\leq\epsilon$,
	\end{enumerate}
	and for all $i\in\NB$
	\begin{enumerate}[resume]
		\item $\|(1\otimes_{r|_{U_{i}}}\varphi_{i})(\alpha_{U_{i}}(u\otimes_{s|_{U_{i}}}1)-u\otimes_{r|_{U_{i}}}1)\|\leq\epsilon$, and
		\item $\|\alpha_{U_{i}}(1-u\otimes_{s|_{U_{i}}}1)h_{i}\|\leq\epsilon$.
	\end{enumerate}
\end{comment}
\end{lemma}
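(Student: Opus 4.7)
My plan is to adapt Kasparov's proof of \cite[Lemma 1.4]{KasparovEqvar} to the groupoid setting, replacing the action of a single group by the bundle of isomorphisms $\alpha_{\sqcup}:s_{\sqcup}^{*}A\to r_{\sqcup}^{*}A$, and reformulating $\GG$-equivariance as a norm-topology constraint in $r_{\sqcup}^{*}A$. The engine is an Arveson-type quasi-central approximate unit argument, applied to an enlarged family of test data encoding both the continuous family $\{\varphi(y)\}_{y\in Y}$ and the $1$-cocycle arising from the $\GG$-action.

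First, reduce to a countable framework: by $\sigma$-unitality of $A$ fix a strictly positive contraction $h\in A_{+}$ and a norm-dense sequence $\{a_{k}\}\subset A$; by $\sigma$-compactness of $Y$, write $Y=\bigcup_{n}K_{n}$ with each $K_{n}$ compact and $K_{n}\subset K_{n+1}$; norm continuity of $y\mapsto [\varphi(y),a_{k}]$ then makes $\{[\varphi(y),a_{k}]:y\in K_{n},\,k\le n\}$ a norm-compact subset of $A$. Also fix a norm-dense sequence $\{c_{m}\}\subset r_{\sqcup}^{*}A$, available since $r_{\sqcup}^{*}A$ is separable.

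Second, introduce the key auxiliary object, the $1$-cocycle
\[
R:A\to\MM(r_{\sqcup}^{*}A),\qquad R(a):=\alpha_{\sqcup}(s_{\sqcup}^{*}a)-r_{\sqcup}^{*}a,
\]
which by continuity of the $\GG$-action is norm-continuous and satisfies the identity $R(ab)=\alpha_{\sqcup}(s_{\sqcup}^{*}a)R(b)+R(a)r_{\sqcup}^{*}b$. Crucially, for any $c\in r_{\sqcup}^{*}A$ the product $cR(a)$ lies in $r_{\sqcup}^{*}A$ and its norm may be estimated fibrewise. The goal then becomes to produce a countable approximate unit $\{e_{i}\}\subset A_{+}$ such that, for each $i$,
\[
\|e_{i}a_{k}-a_{k}\|<\tfrac{1}{i},\qquad \sup_{y\in K_{i}}\|[e_{i},\varphi(y)]\|<\tfrac{1}{i},\qquad \|R(e_{i})\|_{r_{\sqcup}^{*}A}<\tfrac{1}{i}\quad(k\le i).
\]
The first two estimates are obtained by the classical Arveson construction with multipliers drawn from $\{\varphi(y):y\in K_{i}\}$; a diagonal argument in $i$ then combines all three estimates into the desired approximate unit.

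The main obstacle will be the third estimate, because $R$ is a $1$-cocycle rather than a commutator, so Arveson's classical theorem does not directly apply. The resolution I would pursue, following the strategy of \cite[Section 4.3.4]{iakovos}, is to reinterpret the cocycle condition as a commutator condition on an enlarged Hilbert $r_{\sqcup}^{*}A$-module: namely, form the direct sum $r_{\sqcup}^{*}\EE\oplus\alpha_{\sqcup}(s_{\sqcup}^{*}\EE)$ for a suitable faithful standard form $\EE$, on which $\alpha_{\sqcup}\circ s_{\sqcup}^{*}$ and $r_{\sqcup}^{*}$ appear as the two diagonal entries of a single faithful representation of $A$, and $R(a)$ becomes an honest commutator $[a,e]$ with a fixed off-diagonal projection $e\in\MM(r_{\sqcup}^{*}A\oplus r_{\sqcup}^{*}A)$. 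Arveson's theorem applied in this enlarged representation, with test multipliers augmented by $e$ and with test elements drawn from $\{c_{m}\}$, then yields the third estimate simultaneously with the first two, completing the proof via the diagonal argument.
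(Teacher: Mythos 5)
Your overall skeleton---countable reductions via $\sigma$-unitality and $\sigma$-compactness, an Arveson-type convexity argument handling finitely many constraints at a time, and a diagonalisation over $i$---is exactly the route the paper takes, since the paper's proof consists of running Kasparov's proof of \cite[Lemma 1.4]{KasparovEqvar} verbatim with $G$ replaced by $\GG_{\sqcup}$ and $g(u_{i})-u_{i}$ replaced by $\alpha_{u}(e_{i}(s(u)))-e_{i}(r(u))$. The genuine gap is in your treatment of the quasi-invariance estimate. The $2\times 2$ device does rewrite $R(a)=\alpha_{\sqcup}(s_{\sqcup}^{*}a)-r_{\sqcup}^{*}a$ as a commutator $[\pi(a),e]$, where $\pi=(\alpha_{\sqcup}\circ s_{\sqcup}^{*})\oplus r_{\sqcup}^{*}$ and $e$ is the flip (a symmetry, not a projection), but it does \emph{not} put you in the hypotheses of the quasi-central approximate unit theorem: $[\pi(a),e]$ is off-diagonal, hence does not lie in $\pi(A)$, and $\pi(A)$ is not an ideal in $C^{*}(\pi(A),e)$. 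So ``Arveson's theorem applied in the enlarged representation with test multipliers augmented by $e$'' is exactly as unjustified as applying it to the cocycle $R$ directly; the obstacle you identified has merely been relocated, not removed.

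What actually makes the estimate work---and what Kasparov's argument, hence the paper's proof, does---needs no conversion to commutators at all. Since $\alpha_{\sqcup}\circ s_{\sqcup}^{*}$ and $r_{\sqcup}^{*}$ are nondegenerate homomorphisms of $A$ into $\MM(r_{\sqcup}^{*}A)$, their normal extensions to $A^{**}$ are unital; consequently, for \emph{any} countable approximate unit $\{u_{n}\}$ of $A$, both $\alpha_{\sqcup}(s_{\sqcup}^{*}u_{n})$ and $r_{\sqcup}^{*}u_{n}$ are approximate units of $r_{\sqcup}^{*}A$ converging strictly (and $\sigma$-weakly) to $1$, so $R(u_{n})\rightarrow 0$ weakly. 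A Hahn--Banach/Mazur convex-combination argument, carried out in a direct sum of Banach spaces so as to treat simultaneously the constraints coming from $\{a_{k}\}$, from $\{\varphi(y):y\in K_{n}\}$ (placed in $C(K_{n},A)$ via your continuity hypothesis and the derivation hypothesis $[\varphi(y),a]\in A$), and from the invariance terms, then produces the required convex combinations; this weak-convergence step is the heart of the matter and is neither stated nor proved in your plan. Note also that this argument, like Kasparov's, yields smallness of $\alpha_{\sqcup}(s_{\sqcup}^{*}e_{i})-r_{\sqcup}^{*}e_{i}$ uniformly on compact subsets of $\GG_{\sqcup}$, equivalently after cutting down by elements of $C_{c}(\GG_{\sqcup})$ or by your $c_{m}\in r_{\sqcup}^{*}A$; your stated target $\|R(e_{i})\|<1/i$ in the global multiplier norm is strictly stronger and is not what the convexity argument delivers (already for a noncompact group acting on $C_{0}(G)$ no approximate unit is globally almost invariant), so the third constraint should be formulated in the cut-down form, which is also the form in which the lemma is used in the proof of Theorem \ref{kastechlem}.
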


\begin{proof}
	The proof given in \cite[Lemma 1.4]{KasparovEqvar} applies immediately.  One need only replace Kasparov's $G$ with our $\GG_{\sqcup}$, and Kasparov's $g(u_{i})-u_{i}$, defined for $g\in G$, with our $\alpha_{u}(e_{i}(s(u)))-e_{i}(r(u))$, defined for $u\in \GG$.
\end{proof}

\begin{comment}
\begin{proof}
	The proof is similar to that of \cite[Lemme 5.1.1]{legall}.  One defines
	\[
	J':=J\oplus C(K;J)\oplus\bigg(\bigoplus_{i\in\NB}r|_{U_{i}}^{*}J\bigg)\oplus\bigg(\bigoplus_{i\in\NB}r|_{U_{i}}^{*}J\bigg)
	\]
	and then defines a linear map $\Phi:J\rightarrow J'$ that sends $x\in J$ to
	\[
	\bigg(xh-h,(d\mapsto[d,x]),(1\otimes_{r|_{U_{i}}}\varphi_{i})(\alpha_{U_{i}}(x\otimes_{s|_{U_{i}}}1)-x\otimes_{r|_{U_{i}}}1))_{i\in\NB},(\alpha_{U_{i}}(1-x\otimes_{s|_{U_{i}}}1)h')_{i\in\NB}\bigg).
	\]
	The argument in \cite[Lemme 5.1.1]{legall} shows that $\Phi$ extends to a strictly continuous map $\MM(J)\rightarrow\MM(J')$, and that 0 is in the norm closure of $\Phi(C)$, where $C = \{x\in J:\deg(x) = 0, h_{0}\leq x<1\}$.  Since $C$ contains an approximate identity for $J$, one can always find $u\in C$ that is close to $1\in\MM(J)$.  Since $\Phi(1) = 0$ is in the norm closure of $\Phi(C)$, the desired estimates for $u$ follow.
\end{proof}
\end{comment}

Kasparov's Technical Theorem below now follows essentially ``classically".

\begin{thm}[Kasparov's Technical Theorem]\label{kastechlem}
Let $J$ be a $\sigma$-unital $\GG$-algebra, $A_{1}$ and $A_{2}$ $\sigma$-unital subalgebras of $\MM(J)$, with $A_{1}$ a $\GG$-algebra.  Let $\Delta$ be a subset of $\MM(J)$ which is separable in the norm topology and whose commutators with $A_{1}$ act as derivations of $A_{1}$, and let $\varphi\in\MM(r_{\sqcup}^{*}J)$.  Assume that $A_{1}\cdot A_{2}\subset J$, $r_{\sqcup}^{*}A_{1}\cdot\varphi\subset r_{\sqcup}^{*}J$, and that $\varphi\cdot r_{\sqcup}^{*}A_{1}\subset r_{\sqcup}^{*}J$.  Then there are $M_{1},\, M_{2}\in \MM(J)$ of degree 0 such that $M_{1}+M_{2} = 1$, and for which
\begin{enumerate}
	\item $M_{i}a_{i}\in J$ for all $a_{i}\in A_{i}$ and $i = 1,2$,
	\item $[M_{i},d]\in J$ for all $d\in\Delta$, and
	\item $r_{\sqcup}^{*}(M_{2})\cdot\varphi$, $\varphi\cdot r_{\sqcup}^{*}(M_{2})$ and $\alpha_{\sqcup}(s_{\sqcup}^{*}(M_{i})) - r_{\sqcup}^{*}(M_{i})$ are all contained in $r_{\sqcup}^{*}(J)$, for $i = 1,2$.
\end{enumerate}
\begin{comment}
	Let $(A,\alpha)$ be a $\GG$-algebra.  Let $J$ be a $\GG$-subalgebra and an ideal of $A$, let $\FF$ be a separable vector subspace of $\MM(A)$, and let $a$ be a positive element of $\MM(J)$ such that $aA\subset J$.
	
	Let $\{U_{i}\}_{i\in\NB}$ be a countable base for the topology of $\GG$.  For each $i\in\NB$, let $a'_{i}$ be a positive element of $\MM(r|_{U_{i}}^{*}J)$ such that $(r|_{U_{i}}^{*}A)a'_{i}\in r|_{U_{i}}^{*}J$, and let $\chi_{i}$ be a strictly positive element of $C_{0}(U_{i})$.  Then there exists $M\in\MM(A)$ homogeneous of degree 0 with $0<M<1$ such that
	\begin{enumerate}
		\item $(1-M)a\in J$,
		\item $MA\subset J$,
		\item $[\FF,M]\subset J$,
	\end{enumerate}
	and such that for all $i\in\NB$,
	\begin{enumerate}[resume]
		\item $(1\otimes_{r|_{U_{i}}}\chi_{i})(\alpha_{U_{i}}(M\otimes_{s|_{U_{i}}}1)-M\otimes_{r|_{U_{i}}}1)\in r|_{U_{i}}^{*}J$, and
		\item $(1\otimes_{r|_{U_{i}}}\chi_{i})\alpha((1-M)\otimes_{s|_{U_{i}}}1)a'_{i}\in r|_{U_{i}}^{*}J$.
	\end{enumerate}
\end{comment}
\end{thm}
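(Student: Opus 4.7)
The plan is to adapt the proof of Kasparov's Technical Theorem given in \cite[Theorem 1.1.1]{KasparovEqvar}, with Kasparov's group $G$ replaced by $\GG_{\sqcup}$ throughout. The only substantive new input beyond the classical argument is the ``almost-equivariance'' estimate supplied by Lemma \ref{qau}, which feeds directly into item 3.

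First I would apply Lemma \ref{qau} with $B = \MM(J)$ (inheriting its $\GG$-action by multiplier extension from $J$) and $A = A_1$. The auxiliary data is taken to be the map $\varphi_{\mathrm{qau}}:\NB \to \MM(J)$ enumerating a norm-dense countable subset of $\Delta$, augmented by a strictly positive element of $A_2$. The derivation hypothesis on $\Delta$, together with $A_1 \cdot A_2 \subset J \subset A_1$, ensures that $[\varphi_{\mathrm{qau}}(k), a] \in A_1$ for all $a \in A_1$, and norm-continuity on the discrete domain $\NB$ is automatic. This yields a countable approximate unit $\{e_n\} \subset A_1$ with $\|[e_n, d]\| \to 0$ for every $d \in \Delta$ (by density and $\|e_n\| \leq 1$) and
\[
\|\alpha_{\sqcup}(s_{\sqcup}^{*} e_n) - r_{\sqcup}^{*} e_n\| \to 0 \text{ in } r_{\sqcup}^{*} A_1.
\]

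Next, pass to a subsequence $\{n_k\}$ chosen so that all of these error terms, together with the absorption errors of a strictly positive element of $A_2$ and, using $r_{\sqcup}^{*} A_1 \cdot \varphi \subset r_{\sqcup}^{*} J$, the analogous errors involving $\varphi$, decay summably. Form the classical Kasparov series
\[
M_2 := \sum_{k \geq 0} \bigl(e_{n_{k+1}} - e_{n_k}\bigr)^{1/2}\, e_{n_{k+1}}\, \bigl(e_{n_{k+1}} - e_{n_k}\bigr)^{1/2} \in \MM(J),
\]
which converges strictly, and set $M_1 := 1 - M_2 \in \MM(J)$. Items 1 and 2, namely $M_i a_i \in J$ for $a_i \in A_i$ and $[M_i, d] \in J$ for $d \in \Delta$, then follow by exactly the argument of \cite[Theorem 1.1.1]{KasparovEqvar}, using the summability of the commutator errors to pass norm estimates across the series.

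The main obstacle will be verifying the new equivariance conditions in item 3. For $\alpha_{\sqcup}(s_{\sqcup}^{*} M_i) - r_{\sqcup}^{*} M_i \in r_{\sqcup}^{*} J$: applying $\alpha_{\sqcup} \circ s_{\sqcup}^{*}$ termwise to the series for $M_2$ and subtracting termwise the series $r_{\sqcup}^{*} M_2$, the summability of $\|\alpha_{\sqcup}(s_{\sqcup}^{*} e_{n_k}) - r_{\sqcup}^{*} e_{n_k}\|$ forces the difference to be a norm-convergent series whose terms, being products of elements of $r_{\sqcup}^{*} A_1$ with approximately $\GG$-invariant differences, all lie in $r_{\sqcup}^{*} J$; the limit therefore also lies in $r_{\sqcup}^{*} J$, and the case $i = 1$ follows from $M_1 + M_2 = 1$. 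For $r_{\sqcup}^{*} M_2 \cdot \varphi \in r_{\sqcup}^{*} J$: each termwise product lies in $r_{\sqcup}^{*} A_1 \cdot \varphi \subset r_{\sqcup}^{*} J$ by hypothesis, and the choice of subsequence ensures the series converges in norm in $r_{\sqcup}^{*} J$. The statement for $\varphi \cdot r_{\sqcup}^{*} M_2$ is proved symmetrically, using $\varphi \cdot r_{\sqcup}^{*} A_1 \subset r_{\sqcup}^{*} J$.
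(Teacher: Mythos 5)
Your high-level plan --- replace Kasparov's $G$ by $\GG_{\sqcup}$, feed Lemma \ref{qau} into the classical argument, and check the new conditions in item 3 termwise --- is exactly the paper's proof, which simply invokes \cite[Theorem 1.4]{KasparovEqvar} in Higson's form \cite{higsonkastech} with those substitutions. The difficulty is that your explicit reconstruction of the classical construction is not the classical construction, and it fails. Your series $M_{2}=\sum_{k}(e_{n_{k+1}}-e_{n_{k}})^{1/2}e_{n_{k+1}}(e_{n_{k+1}}-e_{n_{k}})^{1/2}$ is built from a single approximate unit $\{e_{n}\}$ of $A_{1}$; since $e_{n_{k+1}}$ is close to $1$ where $e_{n_{k+1}}-e_{n_{k}}$ lives, the series behaves like the telescoping sum $\sum_{k}(e_{n_{k+1}}-e_{n_{k}})=1-e_{n_{0}}$, so $M_{1}\approx e_{n_{0}}$. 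Test this on the trivial groupoid with the ungraded example $J=C_{0}(0,1)$, $A_{1}=C_{0}(0,1]$, $A_{2}=C_{0}[0,1)$ inside $\MM(J)=C_{b}(0,1)$ and $e_{n}(t)=\min(nt,1)$: one computes that $M_{2}(t)=0$ for $t\geq 1/n_{0}$ and $M_{2}(1/n_{K})\geq 1-n_{K-1}/n_{K}$, so $M_{1}$ does not vanish at $t=1$ and $M_{2}$ does not vanish at $t=0$, and neither $M_{1}A_{1}\subset J$ nor $M_{2}A_{2}\subset J$ holds (a correct choice here is $M_{1}(t)=1-t$). Relatedly, the ``absorption errors of a strictly positive element of $A_{2}$'' that you ask to decay summably do not decay at all: $e_{n}h_{2}\in A_{1}A_{2}\subset J$, so $\|(1-e_{n})h_{2}\|\rightarrow 0$ would force $h_{2}\in J$, which is not assumed. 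The same false summability claim underlies your verification that $r_{\sqcup}^{*}(M_{2})\cdot\varphi\in r_{\sqcup}^{*}J$, since $\varphi$ plays exactly the role of $A_{2}$ there.

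The missing idea is that the interpolation between $A_{1}$ and $A_{2}$ cannot be manufactured from an approximate unit of $A_{1}$ alone: in the Kasparov--Higson construction one sandwiches an approximate unit of one of the algebras between the square-root differences $(u_{n+1}-u_{n})^{1/2}$ of a quasi-central, quasi-invariant approximate unit of $J$ (equivalently of $A_{1}+J$, after the standard reduction to $J\subset A_{1}$, which you use but do not state as a reduction), so that every summand already lies in $J$ and the two distinct families of estimates --- quasi-centrality and quasi-invariance from Lemma \ref{qau}, and genuine absorption $\|(u_{n+1}-u_{n})j\|\rightarrow 0$ for $j\in J$ --- can be played off against one another. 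Your treatment of $\alpha_{\sqcup}(s_{\sqcup}^{*}M_{i})-r_{\sqcup}^{*}M_{i}$ is the right idea and would survive once the construction is repaired, because it only uses the quasi-invariance estimate and the hypotheses on $\varphi$; but as written the proposal does not establish items 1 and 3, and the paper is entitled to its one-line proof precisely because Higson's argument, unlike the series above, goes through verbatim after the indicated replacements.
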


\begin{proof}
	After making the same replacements as in the proof of Lemma \ref{qau}, the proof of \cite[Theorem 1.4]{KasparovEqvar} (which in this form is due to Higson \cite{higsonkastech}) applies without change.
\end{proof}

\begin{rmk}\normalfont
	Note that by hypothesis, our $\varphi\in\MM(r_{\sqcup}^{*}(J))$ in Theorem \ref{kastechlem} already gives, for each Hausdorff open subset $U$ of $\GG$, a section $\varphi_{U}\in\MM(r|_{U}^{*}J)$ which is norm-continuous over $U$.  This is to be contrasted with Kasparov's weaker hypotheses on his $\varphi:G\rightarrow \MM(J)$ in \cite[Theorem 1.4]{KasparovEqvar}, where he requires only that $\varphi$ be bounded and induce norm-continuous functions $g\mapsto a\varphi(g)$ and $g\mapsto\varphi(g)a$ for all $a\in A_{1}+J$.  As we will see, our stronger hypothesis does not impact the proof of the existence of the Kasparov product in our setting.
\end{rmk}

\begin{comment}
\begin{proof}
	The proof is the same as in \cite[Theor\`eme 5.1.1]{legall}, using Lemma \ref{qau} instead of \cite[Lemme 5.1.1]{legall}.
\end{proof}
\end{comment}

We recall here the notion of a connection given by Connes and Skandalis \cite{cs}.  Let $A$ and $B$ be $C^{*}$-algebras, and suppose that $\EE_{1}$ is a Hilbert $A$-module, $\EE_{2}$ is a Hilbert $B$-module, and that $\pi:A\rightarrow\LL(\EE_{2})$ is a representation.  Let $\EE_{12} = \EE_{1}\hotimes_{A}\EE_{2}$, and for each $\xi\in \EE_{1}$ we denote by $T_{\xi}\in\LL(\EE_{2},\EE_{12})$ defined by
\[
T_{\xi}\eta:=\xi\hotimes\eta,\hspace{7mm}\eta\in \EE_{2}.
\]
The adjoint of $T_{\xi}$ is given on $\eta\hotimes\zeta\in \EE_{12}$ by
\[
T_{\xi}^{*}(\eta\hotimes\zeta) = \pi(\langle\xi,\eta\rangle)\zeta.
\]
If $F_{2}\in\LL(\EE_{2})$, we say that an operator $F\in\LL(\EE_{12})$ is an \emph{$F_{2}$-connection for $\EE_{1}$} if for all $\xi\in \EE_{1}$, one has
\[ 
T_{\xi}F_{2}-(-1)^{\deg(\xi)\deg(F_{2})}FT_{\xi}\in\KK(\EE_{2},\EE_{12}),
\]
and
\[
F_{2}T_{\xi}^{*}- (-1)^{\deg(\xi)\deg(F_{2})}T_{\xi}^{*}F\in\KK(\EE_{12},\EE_{2}).
\]
If $\EE_{1}$ is countably generated and $[F_{2},\pi(A)]\subset\KK(\EE_{2})$, then the algebra $\LL(\EE_{12})$ contains an $F_{2}$-connection for $\EE_{1}$ \cite[Proposition A.2]{cs}.

We can now define the Kasparov product of two equivariant Kasparov modules in an analogous fashion to the non-equivariant case.

\begin{defn}\label{kasproddefn}
 Let $A$, $B$ and $D$ be $\GG$-algebras, with $A$ separable, $(\EE_{1},F_{1})\in\EB^{\GG}(A,D)$, and $(\EE_{2},F_{2})\in\EB^{\GG}(D,B)$.  Denote by $\EE_{12}$ the bimodule $\EE_{1}\hotimes_{D}\EE_{2}$.  A pair $(\EE_{12},F)$, where $F\in\LL(\EE_{12})$, is called a \textbf{Kasparov product} of $(\EE_{1},F_{1})$ and $(\EE_{2},F_{2})$, written $F\in F_{1}\sharp_{D} F_{2}$, if and only if
\begin{enumerate}
	\item $(\EE_{12},F)\in\EB^{\GG}(A,B)$,
	\item $F$ is an $F_{2}$-connection, and
	\item for all $a\in A$, $a[F_{1}\hotimes 1,F] a^{*}\geq0$ modulo $\KK(\EE_{12})$.
\end{enumerate}
\end{defn}

\begin{thm}\label{kasprod}
	Let $A$, $B$ and $D$ be $\GG$-algebras, with $A$ separable, and suppose $(\EE_{1},F_{1})\in\EB^{\GG}(A,D)$ and $(\EE_{2},F_{2})\in\EB^{\GG}(D,B)$.  Let $\EE_{12}:=\EE_{1}\hotimes_{D}\EE_{2}$.  Then $F_{1}\sharp_{D}F_{2}$ is nonempty, and path connected.  Moreover the class in $KK^{\GG}(A,B)$ determined by any $F\in F_{1}\sharp_{D} F_{2}$ depends only on the $\GG$-homotopy classes of $(\EE_{1},F_{1})$ and $(\EE_{2},F_{2})$ in $KK^{\GG}(A,D)$ and $KK^{\GG}(D,B)$ respectively, and the Kasparov product descends to an associative, bilinear product $KK^{\GG}(A,D)\otimes KK^{\GG}(D,B)\rightarrow KK^{\GG}(A,B)$.
\end{thm}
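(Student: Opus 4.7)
The plan is to adapt the ``classical'' Kasparov/Connes--Skandalis construction, with the two equivariant inputs supplied by Theorem \ref{kastechlem} and Lemma \ref{homo}. Throughout, $A$ is separable and all modules are countably generated, so \cite[Proposition A.2]{cs} guarantees that an $F_2$-connection of degree $1$ exists on $\EE_{12}:=\EE_1\hotimes_D\EE_2$. Fix such a connection $F'$; the product $F$ will be a ``rotation'' between $F_1\hotimes 1$ and $F'$ constructed via Kasparov's technical theorem.

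For existence, I would apply Theorem \ref{kastechlem} to $J=\KK(\EE_{12})$ (viewed as an ideal in $\LL(\EE_{12})=\MM(\KK(\EE_{12}))$), with $A_1$ the $\sigma$-unital $\GG$-subalgebra generated by the image of $A$ acting on $\EE_{12}$, with $A_2$ the $\sigma$-unital subalgebra generated by the ``obstruction'' elements $[F',A]$, $A(F'^2-1)$, $A(F'-F'^*)$ together with $(F_1\hotimes 1)^2-1$ and commutators of $F_1\hotimes 1$ with $A$, with $\Delta$ a separable set containing $A$, $F_1\hotimes 1$ and $F'$, and with
\[
\varphi := \varepsilon_\sqcup(s_\sqcup^*F')-r_\sqcup^*F'\in \MM(r_\sqcup^*\KK(\EE_{12})).
\]
The three hypotheses $A_1A_2\subset J$, $r_\sqcup^*A_1\cdot\varphi\subset r_\sqcup^*J$ and $\varphi\cdot r_\sqcup^*A_1\subset r_\sqcup^*J$ all hold by the defining axioms of $(\EE_1,F_1)$, $(\EE_2,F_2)$ and the connection property. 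The theorem produces $M_1,M_2\in\LL(\EE_{12})$ of degree $0$ with $M_1+M_2=1$, and I would set
\[
F:=M_1^{1/2}(F_1\hotimes 1)+M_2^{1/2}F'.
\]
Conditions 1--3 of Definition \ref{GEKKdefn} and the connection and positivity conditions of Definition \ref{kasproddefn} are then checked by the classical computation of \cite[Theorem 2.11]{KasparovEqvar}. For condition 4 (almost-equivariance), the identity
\[
\varepsilon_\sqcup(s_\sqcup^*F)-r_\sqcup^*F = \sum_{i=1,2}\big(\varepsilon_\sqcup(s_\sqcup^*M_i^{1/2})-r_\sqcup^*M_i^{1/2}\big)\,\varepsilon_\sqcup(s_\sqcup^*F_i^\flat)+r_\sqcup^*M_i^{1/2}\big(\varepsilon_\sqcup(s_\sqcup^*F_i^\flat)-r_\sqcup^*F_i^\flat\big)
\]
(where $F_1^\flat=F_1\hotimes 1$ and $F_2^\flat=F'$), combined with an integral formula for $M_i^{1/2}$ as in the proof of Lemma \ref{homo}, reduces the claim to the facts that $\alpha_\sqcup(s_\sqcup^*M_i)-r_\sqcup^*M_i$ and $r_\sqcup^*(M_2)\cdot\varphi$ lie in $r_\sqcup^*\KK(\EE_{12})$ (this is conclusion 3 of Theorem \ref{kastechlem}) and that $F_1\hotimes 1$ is already almost-equivariant.

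Path-connectedness of $F_1\sharp_DF_2$ is then immediate from Lemma \ref{homo}: if $F,F''\in F_1\sharp_DF_2$, a direct computation shows that $a[F,F'']a^*\geq 0$ modulo $\KK(\EE_{12})$ for every $a\in A$, using that both $F$ and $F''$ are $F_2$-connections (so $F-F''$ commutes with $\pi(D)$ modulo compacts) and both satisfy Definition \ref{kasproddefn}(3). Homotopy invariance of the class $[F]\in KK^\GG(A,B)$ in the arguments $(\EE_1,F_1)$ and $(\EE_2,F_2)$ follows by applying the existence argument to a $\GG$-homotopy $(\EE,F)\in\EB^\GG(A,D\hotimes C([0,1]))$ or $\EB^\GG(D,B\hotimes C([0,1]))$ and observing that evaluation at $t\in[0,1]$ commutes with the whole construction. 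Bilinearity is an exercise in direct sums, and associativity follows from the uniqueness (up to $\GG$-homotopy) of the product together with the Connes--Skandalis argument \cite[Theorem A.3]{cs}, which simply verifies that both iterated products satisfy the defining properties of a Kasparov product for a triple composition $F_1\sharp_DF_2\sharp_EF_3$, and hence must be $\GG$-homotopic.

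The main obstacle is item 4 of Definition \ref{GEKKdefn} for the constructed $F$. In the Hausdorff setting treated by Le Gall this is handled on each Hausdorff open subset separately, but in the locally Hausdorff setting one cannot work directly on $\GG$. The entire reason for pulling back to $\GG_\sqcup$ in Definition \ref{GEKKdefn} and in Theorem \ref{kastechlem}, and for the auxiliary integral trick in Lemma \ref{homo}, is precisely to make this verification go through uniformly; once these are in hand, the proof is a line-by-line transcription of the classical argument.
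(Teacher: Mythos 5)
Your overall strategy is the paper's: form an $F_{2}$-connection $\tilde F_{2}$, feed $J=\KK(\EE_{12})$ and the equivariant obstruction $\varphi=\varepsilon_{\sqcup}(s_{\sqcup}^{*}\tilde F_{2})-r_{\sqcup}^{*}\tilde F_{2}$ into Theorem \ref{kastechlem}, set $F=M_{1}^{1/2}(F_{1}\hotimes 1)+M_{2}^{1/2}\tilde F_{2}$, verify almost-equivariance by the four-term decomposition, and obtain uniqueness, homotopy invariance and associativity from Lemma \ref{homo} together with the arguments of \cite[Theorem 12, Lemma 22]{sk1}. Your remark that passing from conclusion 3 of the Technical Theorem (which controls $\alpha_{\sqcup}(s_{\sqcup}^{*}M_{i})-r_{\sqcup}^{*}M_{i}$) to the corresponding statement for $M_{i}^{1/2}$ requires the integral trick of Lemma \ref{homo} is a legitimate point that the paper's proof passes over silently.

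However, your inputs to Theorem \ref{kastechlem} are wrong, and with them the construction does not run. You take $A_{1}$ to be the algebra generated by the image of $A$ in $\LL(\EE_{12})$, and you place $(F_{1}\hotimes 1)^{2}-1$ and $[F_{1}\hotimes 1,A]$ into $A_{2}$ while omitting $[\tilde F_{2},F_{1}\hotimes 1]$. The correct assignment (the paper's, following Kasparov) is $A_{1}=\KK(\EE_{12})+\KK(\EE_{1})\hotimes 1$, with the image of $A$ appearing only in $\Delta$, and $A_{2}$ generated by $\tilde F_{2}-\tilde F_{2}^{*}$, $\tilde F_{2}^{2}-1$, $[\tilde F_{2},A]$ and $[\tilde F_{2},F_{1}\hotimes 1]$. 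Concretely: (i) with your choices the hypothesis $A_{1}\cdot A_{2}\subset J$ fails, since $\pi(a)\big((F_{1}\hotimes 1)^{2}-1\big)=\big(a(F_{1}^{2}-1)\big)\hotimes 1$ lies in $\KK(\EE_{1})\hotimes 1$ but not in $\KK(\EE_{12})$, and $\pi(ab)(\tilde F_{2}^{2}-1)$ is compact only after multiplication by $\KK(\EE_{1})\hotimes 1$, not by $\pi(A)$; (ii) the containment $\KK(\EE_{1})\hotimes 1\subset A_{1}$ is precisely what makes $M_{1}\cdot\big(a(F_{1}^{2}-1)\hotimes 1\big)\in\KK(\EE_{12})$, which is how the $F_{1}$-obstructions in $a(F^{2}-1)$ are absorbed, while the omitted generator $[\tilde F_{2},F_{1}\hotimes 1]$ of $A_{2}$ is what kills the cross terms; (iii) even if one could force $M_{1}\pi(a)\in\KK(\EE_{12})$ as your $A_{1}$ would demand, then modulo $\KK(\EE_{12})$ the expression $a[F_{1}\hotimes 1,F]a^{*}$ would lose its positive contribution $2M_{1}^{1/4}aa^{*}M_{1}^{1/4}$ and reduce to $M_{2}^{1/2}a[F_{1}\hotimes 1,\tilde F_{2}]a^{*}$, which has no reason to be positive for a general connection, so condition 3 of Definition \ref{kasproddefn} could not be verified. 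Once $A_{1}$, $A_{2}$ and $\Delta$ are assigned as above, the remainder of your outline coincides with the paper's proof.
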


\begin{proof}
Choose an $F_{2}$-connection $\tilde{F}_{2}$ for $\EE_{1}$.  Let $J$ denote the algebra $\KK(\EE_{12})$, $A_{1}$ the $\GG$-algebra $J+\KK(\EE_{1})\hotimes1$, $A_{2}$ the algebra generated by $\tilde{F}_{2}-\tilde{F}_{2}^{*}$, $\tilde{F}_{2}^{2}-1$, $[\tilde{F}_{2},F_{1}\hotimes 1]$ and $[\tilde{F}_{2},A]$, and let $\Delta$ denote the subspace of $\LL(\EE_{12})$ generated by $F_{1}\hotimes 1$, $\tilde{F}_{2}$ and $A$.  Finally denote by $\varphi$ the element $\alpha_{\sqcup}(s_{\sqcup}^{*}\tilde{F}_{2})-r_{\sqcup}^{*}\tilde{F}_{2}$ of $\LL(r_{\sqcup}^{*}\EE_{12})$.  Apply Theorem \ref{kastechlem} to obtain $M_{1}$ and $M_{2}$ with the stated properties, and define
\[
F:=M_{1}^{\frac{1}{2}}(F_{1}\hotimes 1)+M_{2}^{\frac{1}{2}}\tilde{F}_{2}.
\]
Exactly as in the non-equivariant case, the pair $(\EE_{12},F)$ satisfies properties 1., 2., and 3., in Definition \ref{GEKKdefn}, and all that remains to verify that $(\EE_{12},F)\in\EB^{\GG}(A,B)$ is to show that $F$ is almost-equivariant.  For any $a\in r_{\sqcup}^{*}A$, however, we can write $a\cdot(\alpha_{\sqcup}(s_{\sqcup}^{*}F)-r_{\sqcup}^{*}F)\in\LL(r_{\sqcup}^{*}\EE_{12})$ as the sum
\begin{align}\label{sums}
&a\cdot(\alpha_{\sqcup}(s_{\sqcup}^{*}(M_{1}^{\frac{1}{2}}))-r_{\sqcup}^{*}(M_{1}^{\frac{1}{2}}))\alpha_{\sqcup}(s_{\sqcup}^{*}(F_{1}\hotimes 1))+a\cdot r_{\sqcup}^{*}(M_{1}^{\frac{1}{2}})(\alpha_{\sqcup}(s_{\sqcup}^{*}(F_{1}\hotimes 1)-r_{\sqcup}^{*}(F_{1}\hotimes 1)) \nonumber\\ +& a\cdot(\alpha_{\sqcup}(s_{\sqcup}^{*}(M_{2}^{\frac{1}{2}}))-r_{\sqcup}^{*}(M_{2}^{\frac{1}{2}}))\alpha_{\sqcup}(s_{\sqcup}^{*}(\tilde{F}_{2}))+a\cdot r_{\sqcup}^{*}(M_{2}^{\frac{1}{2}})(\alpha_{\sqcup}(s_{\sqcup}^{*}(\tilde{F}_{2}))-r_{\sqcup}^{*}(\tilde{F}_{2}))
\end{align}
The first, third and final terms in Equation \eqref{sums} are contained in $\KK(\EE_{12})$ by item 3. of Theorem \ref{kastechlem}, while the second term in Equation \eqref{sums} is contained in $\KK(\EE_{12})$ by the equivariance of the Kasparov module $(\EE_{1},F_{1})$ together with items 1. and 2. of Theorem \ref{kastechlem}.  Thus $(\EE_{12},F)\in\EB^{\GG}(A,B)$.

Items b. and c. of Definition \ref{kasproddefn} hold by the same arguments as in the non-equivariant case \cite[Theorem 12]{sk1} (cf. \cite[Theorem A.5]{cs}).  This gives existence.  Path connectedness of $F_{1}\sharp_{D}F_{2}$ follows from the same argument given in \cite[Theorem 12]{sk1}, using Lemma \ref{homo} in the place of Skandalis' \cite[Lemma 11]{sk1}, giving uniqueness up to $\GG$-homotopy.  That the Kasparov product descends to a well-defined bilinear product on the $KK^{\GG}$ groups also follows from the arguments of \cite[Theorem 12]{sk1}.  Associativity of this product follows from \cite[Lemma 22]{sk1}, again using Lemma \ref{homo} in the place of \cite[Lemma 11]{sk1}.
\end{proof}

\begin{rmk}\normalfont
	With Theorem \ref{kasprod} in hand, one can now define the external product $KK^{\GG}(A_{1},B_{1}\hotimes D)\otimes KK^{\GG}(D\hotimes A_{2},B_{2})\rightarrow KK^{\GG}(A_{1}\hotimes A_{2},B_{1}\hotimes B_{2})$ in the same manner as \cite[Definition 2.12]{KasparovEqvar}.
\end{rmk}

\section{Crossed products}

We now discuss crossed products of $\GG$-algebras as well as their relationship to $KK^{\GG}$-theory.  We will assume for this that $\GG$ is equipped with a \emph{Haar system}, whose definition we recall for the reader's convenience.

\begin{defn}\label{haar}
	A \textbf{Haar system on $\GG$} is a family $\{\lambda^{x}\}_{x\in\GG^{(0)}}$ of measures on $\GG$ such that each $\lambda^{x}$ is supported on $\GG^{x}$, and is a regular Borel measure thereon, and such that
	\begin{enumerate}
		\item for all $v\in\GG$ and $f\in C_{c}(\GG)$ one has
		\[
		\int_{\GG}f(vu)\,d\lambda^{s(v)}(u) = \int_{\GG}f(u)\,d\lambda^{r(v)}(u)
		\]
		and,
		\item for each $f\in C_{c}(\GG)$, the map
		\[
		x\mapsto \int_{\GG}f(u)\,d\lambda^{x}(u)
		\]
		is continuous with compact support on $\GG^{(0)}$.
	\end{enumerate}
\end{defn}

That $\GG$ admits a Haar system is a nontrivial requirement in general.  Indeed, if $\GG$ admits a Haar system then its range and source maps must be open \cite[Proposition 2.2.1]{pat}, and it is not difficult to find examples of locally compact (even Hausdorff) groupoids for which this is not the case \cite[Section 3]{seda2}.  In the context of a foliated manifold, any choice of leafwise half-density (see \cite[Section 5]{folops}) determines a Haar system on the associated holonomy groupoid.

If $p_{\AF}:\AF\rightarrow\GG^{(0)}$ is an upper-semicontinuous Banach bundle, then for any Hausdorff open subset $U$ of $\GG$, we denote by $\Gamma_{c}(U;r^{*}\AF)$ the space of compactly supported continuous sections of the bundle $r^{*}\AF$ over $U$, extended by zero outside of $U$ to a (not necessarily continuous) function on $\GG$.  We define $\Gamma_{c}(\GG;r^{*}\AF)$ to be the subspace of sections of $r^{*}\AF$ over $\GG$ that are spanned by elements of $\Gamma_{c}(U;r^{*}\AF)$ as $U$ varies over all Hausdorff open subsets of $\GG$.

\begin{rmk}\normalfont
	If $\GG$ is a Lie groupoid (such as the holonomy groupoid of a foliation), then one will usually take smooth sections, denoted $\Gamma_{c}^{\infty}$, instead of continuous sections.
\end{rmk}

\begin{prop}\cite[Proposition 4.4]{muhwil}
	If $(\AF,\GG,\alpha)$ is a groupoid dynamical system, the space $\Gamma_{c}(\GG;r^{*}\AF)$ is a $*$-algebra with respect to the convolution product and adjoint given respectively by
	\[
	f*g(u):=\int_{\GG}f(v)\alpha_{v}\big(g(v^{-1}u)\big)\,d\lambda^{r(u)}(v)\hspace{7mm} f^{*}(u):=\alpha_{u}(f(u^{-1})^{*})
	\]
	for all $f,g\in\Gamma_{c}(\GG;r^{*}\AF)$ and $u\in\GG$.\qed
\end{prop}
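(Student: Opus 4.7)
The plan is to verify the $*$-algebra structure in three stages: closure under convolution, closure under involution, and the algebraic identities (associativity, $(f*g)^{*} = g^{*}*f^{*}$, and $(f^{*})^{*} = f$). By bilinearity, at each stage it suffices to consider $f\in\Gamma_{c}(U;r^{*}\AF)$ and $g\in\Gamma_{c}(V;r^{*}\AF)$ for Hausdorff open subsets $U,V\subseteq\GG$.

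For such $f,g$ and fixed $u\in\GG$, the integrand $v\mapsto f(v)\alpha_{v}(g(v^{-1}u))$ has compact support in the Hausdorff space $U\cap\GG^{r(u)}$, is continuous there by continuity of $\alpha$ and of multiplication, and extends by zero to a compactly supported continuous section of $r^{*}\AF$ over $\GG^{r(u)}$. Hence the integral defining $f*g(u)$ converges in $\AF_{r(u)}$, and this value vanishes off $UV := m(U\times_{s,r}V)$. The delicate step is to show $f*g\in\Gamma_{c}(\GG;r^{*}\AF)$, since $UV$ need not be Hausdorff. To handle this, I would argue that for each $u_{0}\in UV$ with decomposition $u_{0}=u_{0}'v_{0}'$, one can select Hausdorff open neighborhoods $U''\subseteq U$ of $u_{0}'$ and $V''\subseteq V$ of $v_{0}'$ small enough that $W:=m(U''\times_{s,r}V'')$ is a Hausdorff open neighborhood of $u_{0}$ in $\GG$, on which $f*g$ is continuous by dominated convergence and continuity of the integrand in the parameter $u$. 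Since the closure of the support of $f*g$ lies in the compact image of $\mathrm{supp}(f)\times_{s,r}\mathrm{supp}(g)$, one extracts a finite subcover $\{W_{i}\}$ of such Hausdorff opens and uses a subordinate partition of unity $\{\varphi_{i}\}$ with $\varphi_{i}\in C_{c}(W_{i})$ to write $f*g = \sum_{i}\varphi_{i}(f*g)$, each summand lying in $\Gamma_{c}(W_{i};r^{*}\AF)$.

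For the involution, $f^{*}$ is supported in $U^{-1}$, which is Hausdorff open since inversion $i:\GG\rightarrow\GG$ is a homeomorphism, and $u\mapsto\alpha_{u}(f(u^{-1})^{*})$ is continuous on $U^{-1}$ by continuity of $\alpha$, of $i$, and of the fibrewise involution; hence $f^{*}\in\Gamma_{c}(U^{-1};r^{*}\AF)$. That $(f^{*})^{*}=f$ pointwise follows immediately from $\alpha_{u^{-1}}=\alpha_{u}^{-1}$ and involutivity of the fibrewise $*$-operation.

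Finally, associativity and the identity $(f*g)^{*} = g^{*}*f^{*}$ reduce to direct manipulations using left-invariance of the Haar system from Definition \ref{haar}, Fubini's theorem applied on Hausdorff open subsets where the integrands are jointly continuous, the groupoid cocycle identity $\alpha_{uv}=\alpha_{u}\circ\alpha_{v}$, and the $*$-algebra identities in each fibre $\AF_{x}$. The principal obstacle throughout is the topological step establishing that $f*g$ lies in $\Gamma_{c}(\GG;r^{*}\AF)$: one must piece the section together from restrictions to Hausdorff open neighborhoods via a finite partition of unity, which is precisely where the local Hausdorffness of $\GG$ is used in an essential way.
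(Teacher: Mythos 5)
The paper does not prove this proposition at all: it is quoted verbatim from Muhly--Williams \cite[Proposition 4.4]{muhwil} and marked with a \qed, so there is no in-text argument to compare against. Your outline is essentially the architecture of the standard proof in that reference: reduce by bilinearity to $f\in\Gamma_{c}(U;r^{*}\AF)$, $g\in\Gamma_{c}(V;r^{*}\AF)$, show $f*g$ is supported in the compact set $m(\mathrm{supp}f\times_{s,r}\mathrm{supp}g)$, establish continuity on small Hausdorff open sets of the form $U''V''$ (which are open because $r$ and $s$ are open, and Hausdorff once they are squeezed inside a Hausdorff neighbourhood of $u_{0}$), and patch with a finite partition of unity; the algebraic identities then follow from left invariance, Fubini, and the cocycle identity. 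That is the right skeleton.

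Two steps are not justified as written. First, your claim that the integrand $v\mapsto f(v)\alpha_{v}(g(v^{-1}u))$ ``extends by zero to a compactly supported continuous section of $r^{*}\AF$ over $\GG^{r(u)}$'' is false in general: extension by zero of a compactly supported continuous section on a Hausdorff open set need not be continuous on a non-Hausdorff space, and this is precisely the pathology the whole paper is organised around. The integral is nevertheless well defined, because the integrand is continuous with compact support on the Hausdorff open set $\{v\in U\cap\GG^{r(u)}:v^{-1}u\in V\}$ and one integrates there; but the justification must be phrased this way, not via continuity of the zero-extension. Second, and more seriously, continuity of $u\mapsto f*g(u)$ on $W=U''V''$ cannot be obtained by dominated convergence: as $u$ varies, the measures $\lambda^{r(u)}$ live on the \emph{different} fibres $\GG^{r(u)}$, so there is no fixed measure space in which to dominate. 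What is actually needed is item 2 of Definition \ref{haar} (continuity of $x\mapsto\int f\,d\lambda^{x}$), upgraded from scalar functions to compactly supported continuous sections of the pullback of $\AF$ over a Hausdorff open subset of $\GG\times_{r,r}\GG$, applied after the change of variables $(v,u)\mapsto(v,v^{-1}u)$ which places the integrand on the Hausdorff set $U\times_{s,r}V$. This bundle-valued continuity lemma is the genuine technical core of \cite[Proposition 4.4]{muhwil} and is the missing ingredient in your sketch; once it is supplied, the rest of your argument goes through.
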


\begin{rmk}\label{convention}\normalfont
	Note that one could also define a $*$-algebra structure on the space $\Gamma_{c}(\GG;s^{*}\AF)$ obtained by pulling back $\AF$ via the source map.  In this case, the multiplication and involution are given by
	\[
	f*g(u):=\int_{\GG}\alpha_{w}\big(f(uw)\big)g(w^{-1})\,d\lambda^{s(u)}(w),\hspace{7mm} f^{*}(u):=\alpha_{u^{-1}}\big(f(u^{-1})^{*}\big).
	\]
	Moreover, the map $f\mapsto \alpha\circ f$ defines an isomorphism $\Gamma_{c}(\GG;s^{*}\AF)\rightarrow\Gamma_{c}(\GG;r^{*}\AF)$ of $*$-algebras.  While $\Gamma_{c}(\GG;s^{*}\AF)$ is in a certain sense the more natural object to consider for \emph{left} actions of groupoids, we choose to work with pullbacks over the range in accordance with \cite{macr1}.
\end{rmk}

We now obtain the full crossed product of a $\GG$-algebra as follows (cf. \cite[Section 3.5]{koshskand}, \cite[p. 23]{muhwil}).

\begin{defn}\label{fullcrossed}
	If $(\AF,\GG,\alpha)$ is a groupoid dynamical system, one defines the \textbf{$I$-norm} of an element $f\in\Gamma_{c}(\GG;r^{*}\AF)$ by
	\[
	\|f\|_{I}:=\max\bigg\{\sup_{x\in\GG^{(0)}}\int_{\GG}\|f(u)\|_{x}d\lambda^{x}(u),\,\sup_{x\in\GG^{(0)}}\int_{\GG}\|f(u)\|_{r(u)}d\lambda_{x}(u)\bigg\}.
	\]
	The $I$-norm makes $\Gamma_{c}(\GG;r^{*}\AF)$ into a normed $*$-algebra, and the \textbf{full crossed product} $A\rtimes\GG$ is the associated enveloping $C^{*}$-algebra.  More specifically, $A\rtimes\GG$ is the completion of $\Gamma_{c}(\GG;r^{*}\AF)$ in the norm
	\[
	\|f\|_{A\rtimes\GG}:=\sup\{\|\pi(f)\|:\text{$\pi$ is a $\|\cdot\|_{I}$-decreasing $*$-representation of $\Gamma_{c}(\GG;r^{*}\AF)$}\}.
	\]
\end{defn}

The reduced $C^{*}$-algebra of $\GG$ is obtained from the algebra $C_{c}(\GG)$ by completing it with respect to the norm obtained from the canonical family of $*$-representations $\pi_{x}:C_{c}(\GG)\rightarrow\LL(L^{2}(\GG_{x}))$, $x\in\GG^{(0)}$, called the regular representation.  For a groupoid dynamical system $(\AF,\GG,\alpha)$ the situation is slightly more complicated - namely, we must complete the convolution algebra $\Gamma_{c}(\GG;r^{*}\AF)$ with respect to the norm obtained from a particular family \emph{Hilbert modules} constructed from $\AF$. 

\begin{prop}\label{rednorm}
	Let $(\AF,\GG,\alpha)$ be a groupoid dynamical system.  Then for each $x\in\GG^{(0)}$, the completion $L^{2}(\GG_{x};r^{*}\AF)$ of $\Gamma_{c}(\GG_{x};r^{*}\AF)$ in the $\AF_{x}$-valued inner product
	\[
	\langle\xi,\eta\rangle_{x}:=(\xi^{*}*\eta)(x) = \int_{\GG}\alpha_{u}\big(\xi(u^{-1})^{*}\eta(u^{-1})\big)\,d\lambda^{x}(u),
	\]
	defined for $\xi,\eta\in\Gamma_{c}(\GG_{x};r^{*}\AF)$, is a Hilbert $\AF_{x}$-module, with right $\AF_{x}$-action given by
	\[
	(\xi\cdot a)(u):=\xi(u)\alpha_{u}(a),\hspace{7mm}u\in\GG_{x}
	\]
	for all $\xi\in\Gamma_{c}(\GG_{x};r^{*}\AF)$ and $a\in\AF_{x}$.
	
	Moreover, for each $x\in\GG^{(0)}$, a representation $\pi_{x}:\Gamma_{c}(\GG;r^{*}\AF)\rightarrow\LL\big(L^{2}(\GG_{x};r^{*}\AF)\big)$ is defined by the formula
	\[
	\pi_{x}(f)\xi(u):=f*\xi(u) = \int_{\GG}f(v)\alpha_{v}\big(\xi(v^{-1}u)\big)\,d\lambda^{r(u)}(v),\hspace{7mm}u\in\GG_{x}
	\]
	for $f\in\Gamma_{c}(\GG;r^{*}\AF)$ and $\xi\in\Gamma_{c}(\GG_{x};r^{*}\AF)$.  The family $\{\pi_{x}\}_{x\in\GG^{(0)}}$ is called the \textbf{regular representation} associated to $(\AF,\GG,\alpha)$.
\end{prop}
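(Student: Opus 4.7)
The plan is to decompose the statement into (a) well-definedness and positivity of the $\AF_{x}$-valued form, (b) verification of the Hilbert $\AF_x$-module axioms, and (c) well-definedness, $*$-preservation, and boundedness of $\pi_x$. First I would verify that, for $\xi,\eta\in\Gamma_c(\GG_x;r^{*}\AF)$, the integrand $v\mapsto\alpha_{v}(\xi(v^{-1})^{*}\eta(v^{-1}))$ is a compactly supported continuous section over $\GG^{x}$ (its support pushes forward through the continuous inversion of the compact supports of $\xi,\eta$, and continuity is inherited from the joint continuity of $\alpha$ and of the multiplication in $\AF$). The integral then defines an element of $\AF_x$ by Bochner-style integration against the regular Borel measure $\lambda^{x}$. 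Sesquilinearity and conjugate symmetry are routine; positivity follows because each integrand $\alpha_{v}(\xi(v^{-1})^{*}\xi(v^{-1}))$ is positive in $\AF_{x}$ (as $\alpha_v$ is a $*$-isomorphism) and the integral of a compactly supported continuous family of positive elements of a $C^{*}$-algebra is positive. Taking the quotient by the null vectors and completing in the norm $\|\xi\|^{2}:=\|\langle\xi,\xi\rangle_x\|$ then produces the Banach space $L^{2}(\GG_x;r^{*}\AF)$.

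For the Hilbert-module structure I would directly check, using $\alpha_{v}(\xi(v^{-1})^{*}\eta(v^{-1})\alpha_{v^{-1}}(a))=\alpha_{v}(\xi(v^{-1})^{*}\eta(v^{-1}))\,a$, that $\langle\xi,\eta\cdot a\rangle_{x}=\langle\xi,\eta\rangle_{x}\,a$. The standard Hilbert-module estimate $\langle\xi a,\xi a\rangle_{x}\le\|a\|^{2}\langle\xi,\xi\rangle_{x}$ shows the right $\AF_{x}$-action is bounded in the $L^{2}$-seminorm, so it extends to the completion; together with the inner product extension and the $C^{*}$-identity on the norm this furnishes the Hilbert $\AF_x$-module axioms.

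For the regular representation, I would first show that $\pi_{x}(f)\xi$, defined by the convolution formula, lies again in $\Gamma_c(\GG_x;r^{*}\AF)$: its support is contained in $\supp(f)\cdot\supp(\xi)$ (compact as the image of a compact set under groupoid multiplication), and, after reducing to the case where $f\in\Gamma_c(U;r^{*}\AF)$ for a Hausdorff open $U$, continuity follows from the joint continuity of $\alpha$, of multiplication in $\AF$, and of $x\mapsto\int f\,d\lambda^{x}$. A Fubini-type manipulation together with the left-invariance of $\lambda$ and the formula $f^{*}(u)=\alpha_{u}(f(u^{-1})^{*})$ then yields the formal-adjoint identity $\langle\pi_{x}(f)\xi,\eta\rangle_{x}=\langle\xi,\pi_{x}(f^{*})\eta\rangle_{x}$, and in the same way associativity of convolution gives $\pi_{x}(f*g)=\pi_{x}(f)\pi_{x}(g)$.

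The main obstacle will be obtaining the norm estimate $\|\pi_{x}(f)\xi\|\le\|f\|_{I}\|\xi\|$ that promotes $\pi_{x}(f)$ from a merely formally adjointable convolution operator to an element of $\LL(L^{2}(\GG_{x};r^{*}\AF))$. I would derive this via a double Cauchy--Schwarz argument: factor $f(v)\alpha_{v}(\xi(v^{-1}u)) = \bigl(\|f(v)\|^{1/2}\bigr)\cdot\bigl(\|f(v)\|^{-1/2}f(v)\alpha_{v}(\xi(v^{-1}u))\bigr)$ on $\supp(f)$, apply the scalar Cauchy--Schwarz inequality to $\int\cdots\,d\lambda^{r(u)}(v)$ to bound the pointwise norm, and then integrate against $\lambda^{x}$ using the left-invariance of the Haar system to convert one of the factors into $\|f\|_I$ and the other into $\langle\xi,\xi\rangle_{x}$. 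This is the standard Renault-style estimate adapted to the bundle-theoretic setting, and it is the only step where the $I$-norm geometry of $\GG$ interacts nontrivially with the $C^{*}$-valued nature of the fibres; once it is in hand, the remaining verifications of the representation properties are formal.
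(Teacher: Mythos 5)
Your treatment of the Hilbert-module half matches the paper's: both verify the algebraic identities $\langle\xi,\eta\rangle_{x}^{*}=\langle\eta,\xi\rangle_{x}$ and $\langle\xi,\eta\cdot a\rangle_{x}=\langle\xi,\eta\rangle_{x}a$ directly and then complete. You add an explicit positivity argument (each integrand $\alpha_{u}(\xi(u^{-1})^{*}\xi(u^{-1}))$ is positive in $\AF_{x}$ and the integral of a compactly supported continuous positive-element-valued function is positive); the paper leaves this implicit, and your argument is correct and worth having, since positivity here really is elementary, unlike the positivity of the $\Gamma_{c}(\GG;r^{*}\BF)$-valued inner product used later for the descent map. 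Where you genuinely diverge is the representation: the paper does \emph{not} prove any norm estimate for $\pi_{x}$ directly. Instead it constructs the explicit unitary $U_{x}:L^{2}(\GG_{x};\AF_{x})\rightarrow L^{2}(\GG_{x};r^{*}\AF)$, $(U_{x}\tilde{\xi})(u)=\alpha_{u}(\tilde{\xi}(u))$, checks it intertwines the module structures, and shows $U_{x}\circ\tilde{\pi}_{x}(\tilde{f})\circ U_{x}^{*}=\pi_{x}(\alpha\circ\tilde{f})$, thereby outsourcing boundedness to the Khoshkam--Skandalis representation $\tilde{\pi}_{x}$ on the source-pullback picture. Your route is self-contained and more informative (it produces the quantitative bound $\|\pi_{x}(f)\|\leq\|f\|_{I}$ explicitly, which the paper only gets by citation); the paper's route is shorter and avoids redoing the estimate in the bundle setting.

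There is, however, a real problem with your sketch of that key estimate as written. You propose to ``apply the scalar Cauchy--Schwarz inequality to bound the pointwise norm, and then integrate against $\lambda^{x}$ \ldots to convert the other factor into $\langle\xi,\xi\rangle_{x}$.'' But the Hilbert-module norm is $\|\xi\|^{2}=\big\|\int_{\GG}\alpha_{u}\big(\xi(u^{-1})^{*}\xi(u^{-1})\big)\,d\lambda^{x}(u)\big\|$, which is \emph{not} the integral $\int_{\GG}\|\xi(u^{-1})\|^{2}\,d\lambda^{x}(u)$ of the fibrewise norms (only the inequality $\leq$ holds between them). So controlling $\|(f*\xi)(u)\|_{\AF_{r(u)}}$ pointwise and integrating yields a bound by the $L^{2}$-integral of fibrewise norms, which does not dominate by $\|f\|_{I}^{2}\|\langle\xi,\xi\rangle_{x}\|$. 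The standard repair is to compose with an arbitrary state $\phi$ of $\AF_{x}$: then $(\eta,\zeta)_{\phi}:=\phi(\langle\eta,\zeta\rangle_{x})$ is a genuine scalar semi-inner product given by an integral of scalar quantities, the Renault-style double Cauchy--Schwarz argument goes through verbatim in the pre-Hilbert space $H_{\phi}$ to give $(\pi_{x}(f)\xi,\pi_{x}(f)\xi)_{\phi}\leq\|f\|_{I}^{2}(\xi,\xi)_{\phi}$, and taking the supremum over states recovers $\|\pi_{x}(f)\xi\|\leq\|f\|_{I}\|\xi\|$. With that modification (or, alternatively, with the paper's unitary-equivalence shortcut) your argument is complete.
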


\begin{proof}
	It is clear from inspection that for $x\in\GG^{(0)}$, $\xi,\eta\in\Gamma_{c}(\GG_{x};r^{*}\AF)$ and $a\in\AF_{x}$, the formulae defining $\langle\xi,\eta\rangle_{x}$ and $(\xi\cdot a)$ make sense.  Moreover, we have
	\[
	\langle\xi,\eta\rangle^{*}_{x} = (\xi^{*}*\eta)^{*}(x) = (\eta^{*}*\xi)(x) = \langle\eta,\xi\rangle_{x}
	\]
	since $*$ is an involution, and
	\begin{align*}
		\langle\xi,\eta\cdot a\rangle_{x} =& \int_{\GG}\alpha_{u}\big(\xi(u^{-1})^{*}(\eta\cdot a)(u^{-1})\big)\,d\lambda^{x}(u)\\ =& \int_{\GG}\alpha_{u}\big(\xi(u^{-1})^{*}\eta(u^{-1})\alpha_{u^{-1}}(a)\big)\,d\lambda^{x}(u) = \langle\xi,\eta\rangle_{x}a
	\end{align*}
	since $\alpha_{u^{-1}} = \alpha_{u}^{-1}$ as $C^{*}$-isomorphisms.  Consequently \cite[p. 4]{Lance}, the completion $L^{2}(\GG_{x};r^{*}\AF)$ is indeed a Hilbert $\AF_{x}$-module.
	
	The extension of $\pi_{x}$ to a representation $\pi_{x}:\Gamma_{c}(\GG;r^{*}\AF)\rightarrow\LL\big(L^{2}(\GG_{x};r^{*}\AF)\big)$ follows from the argument of Khoshkam and Skandalis \cite[3.6]{koshskand}.  Specifically, Khoshkam and Skandalis show that for $\tilde{f}\in\Gamma_{c}(\GG;s^{*}\AF)$ (see Remark \ref{convention}) and $\tilde{\xi}\in\Gamma_{c}(\GG_{x};\AF_{x})$, the formula
	\[
	\tilde{\pi}_{x}(\tilde{f})\xi(u):= \int_{\GG}\alpha_{w}\big(\tilde{f}(uw)\big)\tilde{\xi}(w^{-1})\,d\lambda^{s(u)}(w),\hspace{7mm}u\in\GG_{x}
	\]
	extends to a representation $\tilde{\pi}_{x}:\Gamma_{c}(\GG;s^{*}\AF)\rightarrow\LL\big(L^{2}(\GG_{x};\AF_{x})\big)$, where $L^{2}(\GG_{x};\AF_{x})$ is the Hilbert $\AF_{x}$-module that is the completion of $\Gamma_{c}(\GG_{x};\AF_{x})$ in the $\AF_{x}$-valued inner product $\langle\tilde{\xi},\tilde{\eta}\rangle_{x}:=\int_{\GG}\tilde{\xi}(u^{-1})^{*}\tilde{\eta}(u^{-1})\,d\lambda^{x}(u)$.  Observe that the map $U_{x}:\Gamma_{c}(\GG_{x};\AF_{x})\rightarrow\Gamma_{c}(\GG_{x};r^{*}\AF)$ defined by
	\[
	\big(U_{x}\tilde{\xi}\big)(u):=\alpha_{u}\big(\tilde{\xi}(u)\big)
	\]
	satisfies $\big(U_{x}(\tilde{\xi}\cdot a)\big)(u) = \alpha_{u}(\tilde{\xi}(u))\alpha_{u}(a) = \big(U_{x}\tilde{\xi}\big)\cdot a(u)$ for all $a\in\AF_{x}$, $\tilde{\xi}\in\Gamma_{c}(\GG_{x};\AF_{x})$ and $u\in\GG_{x}$, and satisfies
	\begin{align*}
		\langle U_{x}\tilde{\xi},\eta\rangle_{x} =& \int_{\GG}\alpha_{u}\big((U_{x}\tilde{\xi})(u^{-1})^{*}\eta(u^{-1})\big)\,d\lambda^{x}(u) = \int_{\GG}\tilde{\xi}(u^{-1})^{*}\alpha_{u}\big(\eta(u^{-1})\big)\,d\lambda^{x}(u)\\ =& \int_{\GG}\tilde{\xi}(u^{-1})^{*}\big(U^{-1}_{x}\eta\big)(u^{-1})\,d\lambda^{x}(u)  = \langle\tilde{\xi},U_{x}^{-1}\eta\rangle_{x}
	\end{align*}
	for all $\tilde{\xi}\in\Gamma_{c}(\GG_{x};\AF_{x})$ and $\eta\in\Gamma_{c}(\GG_{x};r^{*}\AF)$.  Thus it extends to a unitary isomorphism $U_{x}:L^{2}(\GG_{x};\AF_{x})\rightarrow L^{2}(\GG_{x};r^{*}\AF)$ of Hilbert $\AF_{x}$-modules.  Now we observe that
	\begin{align*}
		\big(U_{x}\circ\tilde{\pi}_{x}(\tilde{f})\circ U_{x}^{*}\big)\xi(u) =& \alpha_{u}\bigg(\big(\tilde{\pi}_{x}(\tilde{f})\circ U_{x}^{*}\big)\xi(u)\bigg)\\ =& \alpha_{u}\bigg(\int_{\GG}\alpha_{w}\big(\tilde{f}(uw)\big)\big(U_{x}^{*}\xi\big)(w^{-1})\,d\lambda^{x}(w)\bigg)\\ =& \alpha_{u}\bigg(\int_{\GG}\alpha_{w}\big(\tilde{f}(uw)\big)\alpha_{w}\big(\xi(w^{-1})\big)\,d\lambda^{x}(w)\bigg)\\ =& \int_{\GG}\alpha_{uw}\big(\tilde{f}(uw)\big)\alpha_{uw}\big(\xi(w^{-1})\big)\,d\lambda^{x}(w)\\ =& \int_{\GG}\alpha_{v}\big(\tilde{f}(v)\big)\alpha_{v}\big(\xi(v^{-1}u)\big)\,d\lambda^{r(u)}(v) = \pi_{x}\big(\alpha\circ \tilde{f}\big)\xi(u)
	\end{align*}
	for any $\xi\in\Gamma_{c}(\GG_{x};r^{*}\AF)$, so that $\pi_{x}$ is unitarily equivalent to the representation $\tilde{\pi}_{x}$ and is consequently itself a representation by \cite[Section 3.6]{koshskand}.
\end{proof}

Completing in the norm obtained from the Hilbert module representations of Proposition \ref{rednorm} we obtain the reduced crossed product algebra.

\begin{defn}\label{redcrossed}
	The completion $A\rtimes_{r}\GG$ of $\Gamma_{c}(\GG;r^{*}\AF)$ in the norm
	\[
	\|f\|_{A\rtimes_{r}\GG}:=\sup_{x\in\GG^{(0)}}\|\pi_{x}(f)\|_{L^{2}(\GG_{x};r^{*}\AF)}
	\]
	is a $C^{*}$-algebra called the \textbf{reduced crossed product algebra} associated to the dynamical system $(\AF,\GG,\alpha)$.
\end{defn}

\begin{rmk}\normalfont
	The proof of Proposition \ref{rednorm} together with \cite[Section 3.7]{koshskand} implies that the $C^{*}$-algebra defined in Definition \ref{redcrossed} coincides with that given in \cite{koshskand}.
\end{rmk}

\begin{rmk}\normalfont
	It is important to note that, for any $\GG$-algebra $A$, the calculations of \cite[Section 3.6]{koshskand} show that $\|\cdot\|_{A\rtimes_{r}\GG}\leq \|\cdot\|_{I}$.  Consequently, we have that $\|\cdot\|_{A\rtimes_{r}\GG}\leq\|\cdot\|_{A\rtimes\GG}$ (cf. Definition \ref{fullcrossed}).
\end{rmk}

\begin{rmk}\normalfont\label{inductivelimit}
	In doing calculations with crossed product $C^{*}$-algebras, one usually must work with the algebra $\Gamma_{c}(\GG;r^{*}\AF)$ associated to the upper-semicontinuous $C^{*}$-bundle $\AF$.  In working with this algebra, it is frequently necessary to utilise a finer topology than the (reduced or full) norm topology.  If $p_{\BF}:\BF\rightarrow \GG^{(0)}$ is an upper-semicontinuous Banach bundle, one says that a net $z_{\lambda}$ in $\Gamma_{c}(\GG;r^{*}\BF)$ \emph{converges in the inductive limit topology to $z\in\Gamma_{c}(\GG;r^{*}\BF)$} if $z_{\lambda}\rightarrow z$ uniformly, and if there is a compact set $C$ in $\GG$ such that $z$ and all of the $z_{\lambda}$ vanish off $C$ \cite[Remark 3.9]{muhwil}.  It is then clear that if $z_{\lambda}$ converges to $z$ in the inductive limit topology, then $z_{\lambda}$ \emph{also} converges to $z$ in the $I$-norm topology.  Consequently $z_{\lambda}\rightarrow z$ in both the full and reduced $C^{*}$-norm topologies.
\end{rmk}

\section{The descent map}

We now prove that Kasparov's descent map \cite{KasparovEqvar} exists and continues to function as expected in the equivariant setting for locally Hausdorff groupoids.  While the relevant definitions in the Hausdorff case \cite{legall} are relatively simple using balanced tensor products, in the locally Hausdorff case one must use the bundle picture, which requires slightly more work.

Suppose that we are given a $\GG$-algebra $(B = \Gamma_{0}(\GG;r^{*}\BF),\beta)$ and a $\GG$-Hilbert $B$-module $(\EE = \Gamma_{0}(\GG^{(0)};r^{*}\EF),W)$. Observe that $\Gamma_{c}(\GG;r^{*}\EF)$ admits the $\Gamma_{c}(\GG;r^{*}\BF)$-valued inner product $\langle\cdot,\cdot\rangle_{\GG}$ defined for $\xi,\xi'\in\Gamma_{c}(\GG;r^{*}\EF)$
\[
\langle\xi,\xi'\rangle_{\GG}(u):=\int_{\GG}\beta_{v}\big(\langle\xi(v^{-1}),\xi'(v^{-1}u)\rangle_{s(v)}\big)\,d\lambda^{r(u)}(v),\hspace{7mm}u\in\GG,
\]
which is positive by \cite[Proposition 6.8]{muhwil} and the argument given in \cite[p. 116]{williamscrossed}.  The space $\Gamma_{c}(\GG;r^{*}\EF)$ also admits a right action of $\Gamma_{c}(\GG;r^{*}\BF)$ given by
\[
(\xi\cdot f)(u):=\int_{\GG}\xi(v)\cdot\beta_{v}\big(f(v^{-1}u)\big)\,d\lambda^{r(u)}(v),\hspace{7mm}u\in\GG
\]
for $\xi\in\Gamma_{c}(\GG;r^{*}\EF)$ and $f\in\Gamma_{c}(\GG;r^{*}\BF)$, with $\cdot$ denoting the (fibrewise) right action of $\BF$ on $\EF$.  By completing in the norm attained from either the reduced or full crossed product we obtain \cite[p. 4]{Lance} a Hilbert module.

\begin{defn}
	Given a $\GG$-algebra $(B = \Gamma_{0}(\GG^{(0)};\BF),\beta)$ and a $\GG$-Hilbert $B$-module $(\EE = \Gamma_{0}(\GG^{(0)};r^{*}\EF),W)$, we define $\EE\rtimes\GG$ and $\EE\rtimes_{r}\GG$ to be the completions of $\Gamma_{c}(\GG;r^{*}\EF)$ in the norms
	\[
	\|\xi\|_{\EE\rtimes\GG}:=\|\langle\xi,\xi\rangle_{\GG}\|_{B\rtimes\GG}^{\frac{1}{2}},\hspace{7mm}\|\xi\|_{\EE\rtimes_{r}\GG}:=\|\langle\xi,\xi\rangle_{\GG}\|_{B\rtimes_{r}\GG}^{\frac{1}{2}}
	\]
	respectively.  The space $\EE\rtimes\GG$ is a Hilbert $B\rtimes\GG$-module, while the space $\EE\rtimes_{r}\GG$ is a Hilbert $B\rtimes_{r}\GG$-module, which we will refer to respectively as the \textbf{full and reduced crossed products} of $\EE$ by $\GG$.
\end{defn}

\begin{rmk}\label{fullred}\normalfont
	If $B$ is a $\GG$-algebra and $\EE$ a Hilbert $B$-module, then since the reduced $C^{*}$-norm on $\Gamma_{c}(\GG;r^{*}\BF)$ is bounded by the full norm, we see that the reduced norm on $\Gamma_{c}(\GG;r^{*}\EF)$ is bounded by the full norm also.
\end{rmk}

We pull back operators on equivariant modules to their crossed products as follows.

\begin{prop}\label{adj}
	Let $(B = \Gamma_{0}(\GG^{(0)};\BF),\beta)$ be a $\GG$-algebra and let $(\EE = \Gamma_{0}(\GG^{(0)};\EF),W)$ be a $\GG$-Hilbert $B$-module.  Given $T\in\LL(\EE) = \Gamma_{b}(X;\LL(\EF))$, the operator $r^{*}(T)$ defined on $\Gamma_{c}(\GG;r^{*}\EF)$ defined by
	\[
	\big(r^{*}(T)\,\xi\big)(u):=T(r(u))\xi(u)
	\]
	extends to an operator $r^{*}(T)\in\LL(\EE\rtimes_{r}\GG)$, and consequently to an operator $r^{*}(T)\in\LL(\EE\rtimes\GG)$ also.
\end{prop}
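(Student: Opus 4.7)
The plan is to define $r^*(T)$ on the dense subspace $\Gamma_c(\GG;r^*\EF)$ by the stated pointwise formula and then extend by continuity. First I would verify that $r^*(T)$ maps $\Gamma_c(\GG;r^*\EF)$ into itself: when $\xi \in \Gamma_c(U;r^*\EF)$ for a Hausdorff open $U \subset \GG$, the section $u \mapsto T(r(u))\xi(u)$ is continuous on $U$ by composing the continuous bounded section $T \in \Gamma_b(\GG^{(0)};\LL(\EF))$ with $r|_U$ and using continuity of the $\LL(\EF)$-action on $\EF$, and it clearly has support contained in that of $\xi$. A direct computation, using that the support condition $r(v) = r(u)$ imposed by $\lambda^{r(u)}$ gives $r(v^{-1}u) = s(v)$, then yields the formal adjointness identity
\[
\langle r^*(T)\xi,\eta\rangle_{\GG}(u) = \int_{\GG}\beta_v\big(\langle\xi(v^{-1}),T(s(v))^*\eta(v^{-1}u)\rangle_{s(v)}\big)\,d\lambda^{r(u)}(v) = \langle\xi,r^*(T^*)\eta\rangle_{\GG}(u)
\]
for all $\xi,\eta \in \Gamma_c(\GG;r^*\EF)$.

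The main technical step is the norm bound $\|r^*(T)\xi\|_{\EE\rtimes_r\GG} \leq \|T\|\cdot\|\xi\|_{\EE\rtimes_r\GG}$, which I would establish by the classical trick of writing $\|T\|^2\cdot 1 - T^*T = S^*S$ for $S := (\|T\|^2 - T^*T)^{1/2} \in \LL(\EE)$, well-defined by fibrewise continuous functional calculus since $\|T(x)\| \leq \|T\|$ for each $x \in \GG^{(0)}$. Substituting $T(s(v))^*T(s(v)) + S(s(v))^*S(s(v)) = \|T\|^2 \cdot 1_{\EF_{s(v)}}$ into the integrand of $\langle r^*(T)\xi,r^*(T)\xi\rangle_{\GG} + \langle r^*(S)\xi,r^*(S)\xi\rangle_{\GG}$ produces the identity
\[
\|T\|^2\langle\xi,\xi\rangle_{\GG} - \langle r^*(T)\xi,r^*(T)\xi\rangle_{\GG} = \langle r^*(S)\xi,r^*(S)\xi\rangle_{\GG}
\]
in $\Gamma_c(\GG;r^*\BF) \subset B\rtimes_r\GG$. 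Since positivity of $\langle\cdot,\cdot\rangle_{\GG}$ in $B\rtimes_r\GG$ is already established and $r^*(S)\xi$ again lies in $\Gamma_c(\GG;r^*\EF)$, the right-hand side is positive in $B\rtimes_r\GG$; this yields $\langle r^*(T)\xi,r^*(T)\xi\rangle_{\GG} \leq \|T\|^2\langle\xi,\xi\rangle_{\GG}$ in $B\rtimes_r\GG$ and hence the desired norm bound.

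Combining boundedness with the formal adjointness identity, $r^*(T)$ and $r^*(T^*)$ both extend by continuity to mutually adjoint bounded operators on $\EE\rtimes_r\GG$, so $r^*(T) \in \LL(\EE\rtimes_r\GG)$; $B\rtimes_r\GG$-linearity of the extension is automatic because on the support of $\lambda^{r(u)}$ one has $r(v) = r(u)$, so that $T(r(u))$ can be pulled past the convolution integrand defining the right action. The statement for $\EE\rtimes\GG$ follows identically, replacing the reduced completion by the full one and invoking positivity of $\langle\cdot,\cdot\rangle_{\GG}$ with respect to the full norm (alternatively, one can appeal to Remark \ref{fullred} to reduce the full case to the reduced one fibrewise). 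The expected obstacle is the bookkeeping in the adjointness calculation—specifically tracking that $T^*$ gets evaluated at the correct unit via $r(v^{-1}u) = s(v)$—but this is purely mechanical once the groupoid conventions are pinned down.
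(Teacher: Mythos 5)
Your proposal is correct and follows the same overall architecture as the paper's proof: define $r^{*}(T)$ on the dense subspace $\Gamma_{c}(\GG;r^{*}\EF)$, establish $\Gamma_{c}(\GG;r^{*}\BF)$-linearity from the fibrewise $B$-linearity of $T$, prove the norm bound $\|r^{*}(T)\xi\|\leq\|T\|\,\|\xi\|$, and identify the adjoint as $r^{*}(T^{*})$. Where you differ is at the key estimate. The paper bounds $\langle r^{*}(T)\xi,r^{*}(T)\xi\rangle_{\GG}(u)$ directly by pulling $\|T(s(v))\|^{2}$ out of the integrand; read literally this compares expressions built from the off-diagonal inner products $\langle\xi(v^{-1}),\xi(v^{-1}u)\rangle_{s(v)}$, which are not positive elements, so the displayed inequalities are really shorthand for an operator inequality. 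Your route --- writing $\|T\|^{2}-T^{*}T=S^{*}S$ with $S=(\|T\|^{2}-T^{*}T)^{1/2}\in\LL(\EE)$ and exhibiting $\|T\|^{2}\langle\xi,\xi\rangle_{\GG}-\langle r^{*}(T)\xi,r^{*}(T)\xi\rangle_{\GG}$ as the manifestly positive element $\langle r^{*}(S)\xi,r^{*}(S)\xi\rangle_{\GG}$ --- makes that step rigorous, and has the further advantage that the resulting inequality holds in \emph{any} $C^{*}$-completion in which $\langle\cdot,\cdot\rangle_{\GG}$ is positive, so the full crossed product case follows by exactly the same argument. That is in fact the cleaner way to handle $\EE\rtimes\GG$: your primary argument (positivity in the full completion) is the right one, whereas the parenthetical appeal to Remark \ref{fullred} goes the wrong way --- knowing $\|\cdot\|_{\EE\rtimes_{r}\GG}\leq\|\cdot\|_{\EE\rtimes\GG}$ does not by itself transfer a reduced-norm bound to a full-norm bound --- so you should drop that alternative. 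With that small excision, your proof is complete and, at the norm estimate, somewhat more careful than the one in the paper.
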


\begin{proof}
	That $r^{*}(T)$ is $\Gamma_{c}(\GG;r^{*}\BF)$-linear is clear by the $B$-linearity of $T$.  Moreover, for any $\xi\in\Gamma_{c}(\GG;r^{*}\EF)$ we have
	\begin{align*}
		\langle r^{*}(T)\,\xi,r^{*}(T)\,\xi\rangle_{\GG}(u) =& \int_{\GG}\beta_{v}\big(\langle T(s(v))\xi(v^{-1}),T(s(v))\xi(v^{-1}u)\rangle_{s(v)}\big)\,d\lambda^{r(u)}(v)\\ \leq&\int_{\GG}\|T(s(v))\|^{2}_{\EF_{s(v)}}\beta_{v}\big(\langle\xi(v^{-1}),\xi(v^{-1}u)\rangle_{s(v)}\big)\,d\lambda^{r(u)}(v)\\ \leq& \sup_{x\in\GG^{(0)}}\|T(x)\|^{2}_{\EF_{x}}\int_{\GG}\beta_{v}\big(\langle\xi(v^{-1}),\xi(v^{-1}u)\big)\,d\lambda^{r(u)}(v),
	\end{align*}
	from which we deduce that $\|r^{*}(T)\,\xi\|_{\EE\rtimes_{r}\GG}\leq\|T\|_{\LL(\EE)}\|\xi\|_{\EE\rtimes_{r}\GG}$.  Consequently, $r^{*}(T)$ extends to a bounded, $B\rtimes_{r}\GG$-linear operator on all of $\EE\rtimes_{r}\GG$.  Remark \ref{fullred} then tells us that $r^{*}(T)$ also extends to a bounded $B\rtimes\GG$-linear operator on $\EE\rtimes\GG$.  It is then easily checked in both cases that $r^{*}(T)$ is adjointable, with adjoint $r^{*}(T^{*})$.
	\end{proof}

\begin{rmk}\normalfont
	The identifications of Remark \ref{rmkpullbax} show that when $\GG$ is Hausdorff, our $r^{*}(T)$ defined on $\EE\rtimes_{r}\GG$ and $\EE\rtimes\GG$ agrees with $T\hotimes1$ defined on $\EE\hotimes_{B}(B\rtimes_{r}\GG)$ and $\EE\hotimes_{B}(B\rtimes\GG)$ respectively.
\end{rmk}

\begin{rmk}\normalfont\label{unbddadj}
	If, in the notation of Proposition \ref{adj}, one has an \emph{unbounded} $B$-linear operator $T:\dom(T)\rightarrow\EE$, one defines $\dom(r^{*}(T)):=\Gamma_{c}(\GG;r^{*}\mathfrak{dom}(T))$, where $\mathfrak{dom}(T)$ is exactly as in Definition \ref{operatorpullback}, and obtains $r^{*}(T):\dom(r^{*}(T))\rightarrow \EE\rtimes_{r}\GG$ and $r^{*}(T):\dom(r^{*}(T))\rightarrow\EE\rtimes\GG$ using the same formula as in Proposition \ref{adj}.
\end{rmk}

That the crossed product of an equivariant $A$-$B$-correspondence is an $A\rtimes_{r}\GG$-$B\rtimes_{r}\GG$-correspondence turns out to require some mildly nontrivial argumentation, which does not currently appear in the literature.  We give a full proof below.

\begin{prop}
	Let $(A = \Gamma_{0}(\GG^{(0)};\AF),\alpha)$ and $(B = \Gamma_{0}(\GG^{(0)};\BF),\beta)$ be $\GG$-algebras and let $(\EE = \Gamma_{0}(\GG^{(0)};\EF),W)$ be a $\GG$-Hilbert module.  Then if $\pi:A\rightarrow\LL(\EE)$ is an equivariant representation, the formula
	\[
	(f\cdot\xi)(u):=\int_{\GG}\pi_{r(v)}(f(v))W_{v}\big(\xi(v^{-1}u)\big)\,d\lambda^{r(u)}(v),\hspace{7mm}u\in\GG
	\]
	defined for $f\in\Gamma_{c}(\GG;r^{*}\AF)$ and $\xi\in\Gamma_{c}(\GG;r^{*}\EF)$ determines representations $\pi\rtimes_{r}\GG:A\rtimes_{r}\GG\rightarrow\LL(\EE\rtimes_{r}\GG)$ and $\pi\rtimes\GG:A\rtimes\GG\rightarrow\LL(\EE\rtimes\GG)$.
\end{prop}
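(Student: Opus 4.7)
The plan is to first verify the formula as a $*$-homomorphism on the convolution algebras, then extend to the full crossed product by its universal property, and finally handle the reduced case via an adaptation of Fell's absorption principle to the present setting.

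At the algebraic level, I would verify that $f\mapsto(\xi\mapsto f\cdot\xi)$ defines a $*$-homomorphism from $\Gamma_{c}(\GG;r^{*}\AF)$ into the adjointable $\Gamma_{c}(\GG;r^{*}\BF)$-linear endomorphisms of $\Gamma_{c}(\GG;r^{*}\EF)$.  Well-definedness $f\cdot\xi\in\Gamma_{c}(\GG;r^{*}\EF)$ follows from compact support of $f$ and a partition of unity subordinate to a Hausdorff open cover of its support.  The multiplicativity $(f*g)\cdot\xi=f\cdot(g\cdot\xi)$ is a Fubini-plus-change-of-variables calculation exploiting the left-invariance of the Haar system together with the equivariance identity $\pi_{r(v)}\circ\alpha_{v}=\varepsilon_{v}\circ\pi_{s(v)}$, and the adjointability $\langle f\cdot\xi,\eta\rangle_{\GG}=\langle\xi,f^{*}\cdot\eta\rangle_{\GG}$ is similar, using additionally the axiom $\langle W_{v}e,W_{v}e'\rangle_{r(v)}=\beta_{v}(\langle e,e'\rangle_{s(v)})$ from Definition \ref{Ghilb}.

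For the full crossed product, a Cauchy-Schwarz estimate on the convolution integral, together with the contractivity of each $\pi_{r(v)}$ and the isometry of each $W_{v}$, yields an $I$-norm bound $\|\pi\rtimes\GG(f)\|_{\LL(\EE\rtimes\GG)}\leq\|f\|_{I}$.  The universal property in Definition \ref{fullcrossed} then delivers the continuous extension to a $*$-representation $\pi\rtimes\GG\colon A\rtimes\GG\to\LL(\EE\rtimes\GG)$, with adjointability of each $\pi\rtimes\GG(f)$ inherited from the pre-completion setting by continuity of the involution.

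The main obstacle is the reduced case, which requires the sharper bound $\|\pi\rtimes\GG(f)\|_{\LL(\EE\rtimes_{r}\GG)}\leq\|f\|_{A\rtimes_{r}\GG}$.  The plan is to adapt Fell's absorption principle: for each $x\in\GG^{(0)}$, introduce the Hilbert $\BF_{x}$-module $L^{2}(\GG_{x};r^{*}\EF)$ obtained by completing $\Gamma_{c}(\GG_{x};r^{*}\EF)$ in the $\BF_{x}$-valued inner product $\int_{\GG}\beta_{u}(\langle\zeta(u^{-1}),\zeta'(u^{-1})\rangle_{s(u)})\,d\lambda^{x}(u)$, and let $\sigma_{x}$ denote the fibrewise left action of $\Gamma_{c}(\GG;r^{*}\AF)$ given by the same convolution formula.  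The key step is to construct a $\BF_{x}$-linear map
\[
\Psi_{x}\colon L^{2}(\GG_{x};r^{*}\AF)\hotimes_{\pi_{x}}\EF_{x}\longrightarrow L^{2}(\GG_{x};r^{*}\EF),\qquad \Psi_{x}(\xi\otimes e)(u):=\pi_{r(u)}(\xi(u))W_{u}(e),
\]
and to verify, using the equivariance of $\pi$ and the identity $W_{v}^{-1}W_{u}=W_{v^{-1}u}$ (valid whenever $r(v)=r(u)$), that $\Psi_{x}$ factors through the balanced tensor product, preserves inner products, and intertwines $\Lambda_{x}(f)\hotimes\id_{\EF_{x}}$ with $\sigma_{x}(f)$, where $\Lambda_{x}$ denotes the regular representation of Proposition \ref{rednorm}.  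Since $\|\Lambda_{x}(f)\|\leq\|f\|_{A\rtimes_{r}\GG}$, this intertwining yields $\|\sigma_{x}(f)\eta\|\leq\|f\|_{A\rtimes_{r}\GG}\|\eta\|$ on the image of $\Psi_{x}$; an approximation argument exploiting essential non-degeneracy of $\pi$ at each fibre (which may be arranged without loss of generality by passing to $\overline{\pi(A)\EE}$) together with partition-of-unity sections over Hausdorff open subsets of $\GG_{x}$ extends this bound to all of $L^{2}(\GG_{x};r^{*}\EF)$.  Assembling the fibrewise bounds via Definition \ref{redcrossed} applied to $\langle\pi\rtimes\GG(f)\xi,\pi\rtimes\GG(f)\xi\rangle_{\GG}\in B\rtimes_{r}\GG$ then produces the reduced estimate.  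The density of the image of $\Psi_{x}$ and the compatibility of the fibrewise inner-product norms with the global $B\rtimes_{r}\GG$-valued inner product on $\EE\rtimes_{r}\GG$ are the subtlest technical points, both turning on the interplay between the equivariance of $\pi$ and the bundle structures.
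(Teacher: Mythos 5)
Your proposal is correct in outline, but it reaches the crucial reduced-norm estimate by a genuinely different route than the paper. The algebraic step (well-definedness, multiplicativity, and the adjoint identity $\langle f\cdot\xi,\eta\rangle_{\GG}=\langle\xi,f^{*}\cdot\eta\rangle_{\GG}$ via equivariance of $\pi$ and the substitutions $\overline{v}=vw$, $\overline{w}=w^{-1}$) is the same in both treatments. For the norm bound, the paper works entirely with the global $B\rtimes_{r}\GG$-valued inner product: it cuts down by the projection $p$ onto $\overline{\pi(\AF)\EF}$, uses fibrewise nondegeneracy and \cite[Proposition 6.8]{muhwil} to reduce to $\xi=g\cdot\xi'$ with $g\in\Gamma_{c}(\GG;r^{*}\AF)$, and then invokes the $C^{*}$-inequality $g^{*}*f^{*}*f*g\leq\|f\|^{2}_{A\rtimes_{r}\GG}\,g^{*}*g$ to get $\langle f\cdot\xi,f\cdot\xi\rangle_{\GG}\leq\|f\|^{2}_{A\rtimes_{r}\GG}\langle\xi,\xi\rangle_{\GG}$ in one line; the full case is then deduced from the reduced one. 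You instead prove the bound fibrewise by a Fell absorption argument, exhibiting $\sigma_{x}$ as unitarily contained in $\Lambda_{x}\hotimes\id_{\EF_{x}}$ via the intertwiner $\Psi_{x}$. Your intertwining and inner-product computations do check out (they use exactly the identities $W_{v}\pi_{s(v)}(a)W_{v}^{-1}=\pi_{r(v)}(\alpha_{v}(a))$ and $W_{v}W_{v^{-1}u}=W_{u}$ that the paper uses in its global computation), and the two subtleties you flag are real but surmountable: density of the range of $\Psi_{x}$ again comes down to nondegeneracy on $p\EF$, and the assembly of fibrewise bounds must go through $\|\zeta\|^{2}_{\EE\rtimes_{r}\GG}=\sup_{x}\|\pi^{B}_{x}(\langle\zeta,\zeta\rangle_{\GG})\|=\sup_{x}\sup_{\|\eta\|\leq1}\|\zeta*\eta\|^{2}_{L^{2}(\GG_{x};r^{*}\EF)}$ rather than the evaluation-at-$x$ norms, after which $(f\cdot\zeta)*\eta=\sigma_{x}(f)(\zeta*\eta)$ closes the argument. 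What your approach buys is a structural explanation of \emph{why} the reduced norm controls the action (the module representation is absorbed into an amplification of the regular representation), at the cost of the extra fibrewise bookkeeping; the paper's argument is shorter and stays at the level of the global inner product. One further difference: you handle the full crossed product separately via the universal property, which requires localizing the Hilbert-module representation to Hilbert-space representations before Definition \ref{fullcrossed} applies, and the ``Cauchy--Schwarz'' step there is really the operator inequality $\langle f\cdot\xi,f\cdot\xi\rangle_{\GG}\leq\|f\|_{I}^{2}\langle\xi,\xi\rangle_{\GG}$ in $B\rtimes\GG$ rather than a pointwise estimate; the paper sidesteps this entirely by running its reduced-norm argument and then appealing to $\|\cdot\|_{A\rtimes_{r}\GG}\leq\|\cdot\|_{A\rtimes\GG}$.
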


\begin{proof}
	First observe that $\pi\rtimes_{r}\GG$ is $*$-preserving in the sense that for any $f\in\Gamma_{c}(\GG;r^{*}\AF)$, and $\xi,\xi'\in\Gamma_{c}(\GG;r^{*}\EF)$, the element $\langle f\cdot\xi,\xi'\rangle_{\GG}(u)$ of $\BF_{r(u)}$ is equal to
	\begin{align*}
		&\int_{\GG}\int_{\GG}\beta_{v}\big(\langle\pi_{s(v)}(f(w))W_{w}\big(\xi(w^{-1}v^{-1})\big),\xi'(v^{-1}u)\rangle_{s(v)}\big)\,d\lambda^{s(v)}(w)\,d\lambda^{r(u)}(v)\\ =& \int_{\GG}\int_{\GG}\beta_{v}\big(\langle W_{w}\big(\xi(w^{-1}v^{-1})\big),\pi_{s(v)}(f(w)^{*})\xi'(v^{-1}u)\rangle_{s(v)}\big)\,d\lambda^{s(v)}(w)\,d\lambda^{r(u)}(v)\\ =& \int_{\GG}\int_{\GG}\beta_{vw}\big(\langle\xi(w^{-1}v^{-1}),\varepsilon_{w^{-1}}\big(\pi_{s(v)}(f(w)^{*})\big)W_{w^{-1}}\big(\xi'(v^{-1}u)\big)\rangle_{s(w)}\big)\,d\lambda^{s(v)}(w)\,d\lambda^{r(u)}(v)\\ =& \int_{\GG}\int_{\GG}\beta_{\overline{v}}\big(\langle\xi(\overline{v}^{-1}),\pi_{s(\overline{v})}(f^{*}(\overline{w}))W_{\overline{w}}\big(\xi'(\overline{w}^{-1}\overline{v}^{-1}u)\big)\rangle_{s(\overline{v})}\big)\,d\lambda^{s(\overline{v})}(\overline{w})\,d\lambda^{r(u)}(\overline{v})\\ =& \langle\xi,(f^{*}\cdot\xi')\rangle_{\GG}(u)
	\end{align*}
	for all $u\in\GG$.  Here we have used the equivariance of the representation $\pi$ and the substitutions $\overline{v}:=vw$ and $\overline{w}:=w^{-1}$ in going from the third line to the fourth.
	
	To prove that $f\mapsto f\cdot$ extends to a homomorphism $A\rtimes_{r}\GG\rightarrow\LL(\EE\rtimes_{r}\GG)$ we must show that $\|f\cdot\xi\|\leq\|f\|_{A\rtimes_{r}\GG}\|\xi\|_{\EE\rtimes_{r}\GG}$ for all $f\in\Gamma_{c}(\GG;r^{*}\AF)$ and $\xi\in\Gamma_{c}(\GG;r^{*}\EF)$.  Because $\EF$, $\AF$ and $\BF$ are in general \emph{different} bundles, we cannot use the techniques of \cite[Theorem 1.4]{ouchi} or its analogue in the non Hausdorff case \cite{tu}; nor can we use the techniques of \cite[Section 8]{muhwil} since we are working with reduced crossed products and not full crossed products.  Observe, however, that if $p:\EF\rightarrow\overline{\pi(\AF)\EF}$ denotes the family of projections
	\[
	p_{x}:\EF_{x}\rightarrow\overline{\pi_{x}(\AF_{x})\EF_{x}},\hspace{7mm}x\in\GG^{(0)},
	\]
	then $\pi_{x}(a)e = \pi_{x}(a)(p_{x}e)$ for all $x\in\GG^{(0)}$, $a\in\AF_{x}$ and $e\in\EF_{x}$.  Then for any $f\in\Gamma_{c}(\GG;r^{*}\AF)$ and $\xi\in\Gamma_{c}(\GG;r^{*}\EF)$ we compute
	\begin{align*}
		(f\cdot\xi)(u) =& \int_{\GG}\pi_{r(v)}(f(v))W_{v}\big(\xi(v^{-1}u)\big)\,d\lambda^{r(u)}(v)\\ =& \int_{\GG}W_{v}\bigg(\varepsilon_{v^{-1}}\big(\pi_{r(v)}(f(v))\big)\big(\xi(v^{-1}u)\big)\bigg)\,d\lambda^{r(u)}(v).
	\end{align*}
	Using the equivariance of $\pi$, we then see that
	\begin{align*}
(f\cdot\xi)(u)  =& \int_{\GG}W_{v}\big(\pi_{s(v)}(\alpha_{v^{-1}}(f(v)))\xi(v^{-1}u)\big)\,d\lambda^{r(u)}(v)\\ =& \int_{\GG}W_{v}\big(\pi_{s(v)}(\alpha_{v^{-1}}(f(v)))p_{s(v)}\xi(v^{-1}u)\big)d\lambda^{r(u)}(v)\\ =& \int_{\GG}\pi_{r(v)}(f(v))W_{v}\big(p_{s(v)}\xi(v^{-1}u)\big)\,d\lambda^{r(u)}(v)\\ =& (f\cdot(r^{*}(p)\xi))(u),
	\end{align*}
	so we can assume without loss of generality that $\xi\in\Gamma_{c}(\GG;r^{*}(p\EF))$.  Since the representation $\pi$ of $\AF$ on $p\EF$ is (fibrewise) nondegenerate, by \cite[Proposition 6.8]{muhwil} there exists a sequence $(e_{i})_{i\in\NB}$ in $\Gamma_{c}(\GG;r^{*}\AF)$ such that for any $\xi\in\Gamma_{c}(\GG;r^{*}(p\EF))$, the sequence $(\pi(e_{i})\xi)_{i\in\NB}$ converges to $\xi$ in the inductive limit topology on $\Gamma_{c}(\GG;r^{*}\EF)$.  Thus we can assume without loss of generality that $\xi$ is of the form $g\cdot\xi'$ for $g\in\Gamma_{c}(\GG;r^{*}\AF)$ and $\xi'\in\Gamma_{c}(\GG;r^{*}(p\EF))$.  For such $\xi$ we estimate
	\begin{align*}
		\langle f\cdot\xi,f\cdot\xi\rangle_{\GG} =& \langle\xi',(g^{*}*f^{*}*f*g)\cdot\xi'\rangle_{\GG}\\ \leq& \|f\|^{2}_{A\rtimes_{r}\GG}\langle\xi',(g^{*}*g)\cdot\xi'\rangle_{\GG}\\ =&\|f\|^{2}_{A\rtimes_{r}\GG}\langle\xi,\xi\rangle_{\GG}.
	\end{align*}
	Hence
	\[
	\|f\cdot\xi\|_{\EE\rtimes_{r}\GG}\leq\|f\|_{A\rtimes_{r}\GG}\|\xi\|_{\EE\rtimes_{r}\GG},
	\]
	so $f\mapsto f\cdot$ does indeed define a homomorphism $\pi\rtimes_{r}\GG:A\rtimes_{r}\GG\rightarrow\LL(\EE\rtimes_{r}\GG)$.  By Remark \ref{fullred}, $f\mapsto f\cdot$ also extends to a homomorphism $\pi\rtimes\GG:A\rtimes\GG\rightarrow\LL(\EE\rtimes\GG)$.
\end{proof}

Observe now that if $(\EE,F)$ is a $\GG$-equivariant Kasparov $A$-$B$-module, then we can form the pairs $(\EE\rtimes\GG,r^{*}(F))$ and $(\EE\rtimes_{r}\GG,r^{*}(F))$.  It is in this way that we obtain the following theorem.

\begin{thm}\label{descent1}
	Let $A$ and $B$ be $\GG$-algebras.  For any $\GG$-equivariant Kasparov $A$-$B$-module $(\EE,F)$, we have $(\EE\rtimes\GG,r^{*}(F))\in\EB(A\rtimes\GG,B\rtimes\GG)$ and $(\EE\rtimes_{r}\GG,r^{*}(F))\in\EB(A\rtimes_{r}\GG,B\rtimes_{r}\GG)$ (see Remark \ref{nonequi}). The induced maps
	\[
	j^{\GG}:KK^{\GG}(A,B)\rightarrow KK(A\rtimes\GG,B\rtimes\GG),\hspace{5mm}j^{\GG}_{r}:\KK^{\GG}(A,B)\rightarrow KK(A\rtimes_{r}\GG,B\rtimes_{r}\GG)
	\]
	are homomorphisms of abelian groups, and are compatible with the Kasparov product in the sense that if $x\in KK^{\GG}(A,C)$ and $y\in KK^{\GG}(C,B)$, then we have
	\[
	j^{\GG}(x\otimes_{C}y) = j^{\GG}(x)\otimes_{C\rtimes\GG}j^{\GG}(y),\hspace{7mm} j^{\GG}_{r}(x\otimes y) = j^{\GG}_{r}(x)\otimes_{C\rtimes_{r}\GG}j^{\GG}_{r}(y).
	\]
\end{thm}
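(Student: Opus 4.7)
The plan is to adapt Kasparov's proof in \cite{KasparovEqvar} to our setting, exploiting the fact that almost-equivariance in Definition \ref{GEKKdefn} has been phrased as a single condition over $\GG_{\sqcup}$. I will treat the full crossed product; the reduced case is verbatim. First I would verify that $(\EE\rtimes\GG, r^*(F))$ defines a class in $\EB(A\rtimes\GG, B\rtimes\GG)$. Self-adjointness and the Fredholm condition follow routinely from the relations $r^*(F)^* = r^*(F^*)$ and $r^*(F)^2 = r^*(F^2)$ (Proposition \ref{adj}), combined with the pointwise properties of $F$ and the inductive limit density of $\Gamma_c(\GG;r^*\AF)$ in $A\rtimes\GG$. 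The commutator with an element of $A\rtimes\GG$ is where almost-equivariance enters: a direct calculation on $\xi\in\Gamma_c(\GG; r^*\EF)$ for $f\in\Gamma_c(\GG;r^*\AF)$ yields
\[
\big([r^*(F),(\pi\rtimes\GG)(f)]\xi\big)(u) = \int_{\GG}\Big([F(r(v)),\pi_{r(v)}(f(v))] + \pi_{r(v)}(f(v))\big[F(r(v))-\varepsilon_v(F(s(v)))\big]\Big)W_v\xi(v^{-1}u)\,d\lambda^{r(u)}(v).
\]
By partitions of unity and Remark \ref{inductivelimit} I may assume $f$ is supported in a single $U_i\subset\GG_{\sqcup}$; extending by zero produces an element of $r_{\sqcup}^*A$ under $\iota_i$, and item 4 of Definition \ref{GEKKdefn} shows the second summand is pointwise compact, while item 3 handles the first. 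Convolution with such pointwise-compact integrands lies in $\KK(\EE\rtimes\GG)$ by standard arguments.

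Having shown that descent sends Kasparov modules to Kasparov modules, I would establish well-definedness at the level of $KK$-groups using the canonical isomorphism $(B\hotimes C([0,1]))\rtimes\GG\cong(B\rtimes\GG)\hotimes C([0,1])$ (valid because $C([0,1])$ carries the trivial $\GG$-action), under which descent commutes with evaluation at $t\in[0,1]$. Hence $\GG$-homotopies descend to ordinary homotopies, and $j^{\GG}$, $j^{\GG}_r$ are well-defined on $KK$-classes. Additivity is immediate from the compatibility of crossed products with direct sums.

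Finally, for compatibility with the Kasparov product I would adapt \cite[Theorem 3.11]{KasparovEqvar}: given $F\in F_1\sharp_D F_2$, I would show that $r^*(F)\in r^*(F_1\hotimes 1)\,\sharp_{D\rtimes\GG}\,r^*(F_2)$ by verifying the three clauses of Definition \ref{kasproddefn}. The first is the content of the previous steps. That $r^*(F)$ is an $r^*(F_2)$-connection for $\EE_1\rtimes\GG$ follows from the corresponding property of $F$ by computing the creation and annihilation operators $T_\xi$, $T_\xi^*$ associated to $\xi\in\EE_1\rtimes\GG$ directly via the convolution formulae. The positivity $a[r^*(F_1\hotimes 1),r^*(F)]a^*\geq 0$ modulo $\KK(\EE\rtimes\GG)$ reduces to its pointwise counterpart by the same estimates used above. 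The main obstacle throughout is the commutator calculation in the first step, in which the convolution structure must be disentangled from the almost-equivariance of $F$; the uniform formulation of item 4 of Definition \ref{GEKKdefn} over $\GG_{\sqcup}$ is exactly what makes this argument tractable, and once it is dealt with, the remaining arguments are essentially formal and follow the group case of \cite{KasparovEqvar}.
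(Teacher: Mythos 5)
Your proposal is correct and follows essentially the same route as the paper, which likewise reduces the whole statement to Kasparov's Theorem 3.11 after substituting $\Gamma_{c}(\GG;r^{*}\AF)$, $\Gamma_{c}(\GG;r^{*}\EF)$ and $\Gamma_{c}(\GG;r^{*}\KK(\EF))$ for Kasparov's compactly supported function spaces over a group; your commutator computation is precisely the point where item 4.\ of Definition \ref{GEKKdefn} enters, and it matches the paper's intended argument. The only point the paper addresses that you omit is the verification that $\EE\rtimes\GG$ and $\EE\rtimes_{r}\GG$ are countably generated (via a countable approximate identity for $C_{c}(\GG)$ in the inductive limit topology and a countable generating set for $\EE$), which is needed because membership in $\EB(A\rtimes\GG,B\rtimes\GG)$ presupposes countably generated modules under the paper's standing conventions.
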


\begin{proof}
	That $\EE\rtimes\GG$ and $\EE\rtimes_{r}\GG$ are countably generated can be seen by taking a countable approximate identity $\{f_{i}\}_{i\in\NB}$ for $C_{c}(\GG)$ in the inductive limit topology and a countable generating set $\{e_{j}\}_{j\in\NB}$ for $\Gamma_{0}(X;\EF)$.  Then, letting $\cdot$ denote pointwise multiplication, the countable collection $\{f_{i}\cdot r^{*}e_{j}\}_{i,j\in\NB}$ is a generating set for both $\EE\rtimes\GG$ and $\EE\rtimes_{r}\GG$ (note that since each $f_{i}$ is a finite sum of functions that are continuous and supported in Hausdorff open subsets, so too is each $f_{i}\cdot r^{*}e_{j}$).  We have already proved in Proposition \ref{adj} that the operator $r^{*}(F)$ makes sense in both $\LL(\EE\rtimes\GG)$ and $\LL(\EE\rtimes_{r}\GG)$.  The result is otherwise proved in the same manner as in \cite[Theorem 3.11]{KasparovEqvar}, once one replaces Kasparov's $C_{c}(G,A)$, $C_{c}(G,\mathscr{E})$ and $C_{c}(G,\mathscr{K}(\mathscr{E}))$ with our $\Gamma_{c}(\GG;r^{*}\AF)$, $\Gamma_{c}(\GG;r^{*}\EF)$ and $\Gamma_{c}(\GG;r^{*}\KK(\EF))$ respectively.
\end{proof}

We end the paper by noting that the descent map can also be applied to an unbounded equivariant Kasparov module, to yield an unbounded Kasparov module for the associated crossed product algebras.

\begin{prop}\label{descent}
	Let $A$ and $B$ be $\GG$-algebras, 
	and let $(\AA,\EE,D)$ be a $\GG$-equivariant unbounded 
	Kasparov $A$-$B$-module.  Let $\AS\subset\AF$ be as in Definition \ref{unbdd}.  Then
	\[
	(\Gamma_{c}(\GG;r^{*}\AS),\EE\rtimes\GG,\overline{r^{*}(D)}),\hspace{7mm}(\Gamma_{c}(\GG;r^{*}\AS),\EE\rtimes_{r}\GG,\overline{r^{*}(D)})
	\] 
	are, respectively, an unbounded Kasparov $A\rtimes\GG$-$B\rtimes\GG$-module and an unbounded Kasparov $A\rtimes_{r}\GG$-$B\rtimes_{r}\GG$-module.
\end{prop}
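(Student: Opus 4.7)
The plan is to verify that the stated triples satisfy the three conditions of Definition \ref{unbdd} for the trivial groupoid on a point, namely: $\overline{r^{*}(D)}$ is self-adjoint and regular; the resolvent condition $(\pi\rtimes\GG)(f)(1+\overline{r^{*}(D)}^{2})^{-\frac{1}{2}}\in\KK(\EE\rtimes\GG)$ holds for all $f\in A\rtimes\GG$; and commutators $[\overline{r^{*}(D)},(\pi\rtimes\GG)(f)]$ extend boundedly for $f\in\Gamma_{c}(\GG;r^{*}\AS)$ (with the analogous statements for the reduced crossed product). Density of $\Gamma_{c}(\GG;r^{*}\AS)$ in $A\rtimes\GG$ (resp. $A\rtimes_{r}\GG$) is immediate from density of $\AA$ in $A$ together with the inductive-limit-topology density observed in Remark \ref{inductivelimit}. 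The guiding principle throughout is that $r^{*}(D)$ acts pointwise over $\GG$, so fibrewise properties of $D$ transfer to $\overline{r^{*}(D)}$ once combined with the almost-equivariance hypotheses in Definition \ref{unbdd}, items 2 and 3.

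For the compact resolvent property, I would first note that since $r^{*}(D)$ is the pointwise pullback of the fibrewise operator $D_{x}$, the same is true of $(1+\overline{r^{*}(D)}^{2})^{-\frac{1}{2}}$, which agrees with the element of $\Gamma_{b}(\GG^{(0)};\LL(\EF))$ given by $x\mapsto(1+D_{x}^{2})^{-\frac{1}{2}}$, pulled back along $r$ as in Proposition \ref{adj}. The composition $(\pi\rtimes\GG)(f)\circ r^{*}((1+D^{2})^{-\frac{1}{2}})$, when expanded using the convolution formula, has integrand $\pi_{r(v)}(f(v))(1+D_{r(v)}^{2})^{-\frac{1}{2}}\circ W_{v}$ taking values fibrewise in $\KK(\EF)$ by Definition \ref{unbdd}(1); compactness in $\KK(\EE\rtimes\GG)$ (resp. $\KK(\EE\rtimes_{r}\GG)$) then follows by the same descent argument as in the bounded case of Theorem \ref{descent1}, via Proposition \ref{opbundle1}-style identifications of compact operators as sections. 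For bounded commutators, a direct computation on $\xi\in\Gamma_{c}(\GG;r^{*}\mathfrak{dom}(D))$ yields
\[
\bigl([\overline{r^{*}(D)},(\pi\rtimes\GG)(f)]\xi\bigr)(u)=\int_{\GG}\!\Bigl([D_{r(u)},\pi_{r(u)}(f(v))]+\pi_{r(u)}(f(v))\bigl(D_{r(u)}W_{v}-W_{v}D_{s(v)}\bigr)\Bigr)\xi(v^{-1}u)\,d\lambda^{r(u)}(v),
\]
and each integrand term is bounded by Definition \ref{unbdd}(2) (using that $f\in\Gamma_{c}(\GG;r^{*}\AS)$ provides the compactly-supported cutoff of the almost-equivariance part).

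The main obstacle is establishing that $\overline{r^{*}(D)}$ is self-adjoint and regular, since the other two conditions reduce to fibrewise facts once self-adjointness and regularity of the closure are in hand. Symmetry on the core $\Gamma_{c}(\GG;r^{*}\mathfrak{dom}(D))$ follows pointwise from symmetry of each $D_{r(u)}$. For regularity, the strategy is to show that $r^{*}(D)\pm i$ has range dense in the inductive limit topology on $\Gamma_{c}(\GG;r^{*}\EF)$, hence dense in $\EE\rtimes\GG$ and in $\EE\rtimes_{r}\GG$. The delicate point is that one must approximate any compactly-supported continuous section $\eta$ by $(r^{*}(D)\pm i)\xi$ in a coherent, continuous fashion in $u$; this is where condition (3) of Definition \ref{unbdd}, namely $\dom(r_{\sqcup}^{*}D\,f)=W_{\sqcup}\dom(s_{\sqcup}^{*}D\,f)$ for $f\in C_{c}(\GG_{\sqcup})$, enters critically, ensuring that bump-function cutoffs interact correctly with the domain. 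This is the content of Pierrot's \cite[Th\'{e}or\`{e}me 6]{pierrot}, whose argument carries over verbatim after replacing Pierrot's $G$ with $\GG_{\sqcup}$, as already noted in the paper for the bounded descent of almost-equivariance.
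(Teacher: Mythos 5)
Your verification follows essentially the same route as the paper's proof, which simply delegates the details to \cite[Proposition 2.11]{macr1}: one checks symmetry, self-adjointness and regularity of $\overline{r^{*}(D)}$, boundedness of commutators via the convolution formula together with item 2.\ of Definition \ref{unbdd}, and compactness of the resolvent by exhibiting $(1+\overline{r^{*}(D)}^{2})^{-\frac{1}{2}}(\pi\rtimes\GG)(f^{*})$ (or its adjoint) as an element of $\Gamma_{c}(\GG;r^{*}\KK(\EF))$ and approximating by finite-rank operators as in the bounded descent. Two points in your write-up are misattributed, though neither is fatal. First, in your commutator formula the term $[D_{r(u)},\pi_{r(u)}(f(v))]$ must act on $W_{v}\big(\xi(v^{-1}u)\big)$ rather than on $\xi(v^{-1}u)$, which lives in $\EF_{s(v)}$; and it is item 3.\ of Definition \ref{unbdd} (the domain condition) that guarantees $(\pi\rtimes\GG)(f)$ preserves $\dom(r^{*}(D))$ in this computation, since the module action involves $W_{v}$ --- that condition is not what drives regularity. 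Second, self-adjointness and regularity of $\overline{r^{*}(D)}$ are not the content of Pierrot's Th\'eor\`eme 6, which this paper invokes only for the almost-equivariance of the bounded transform $D(1+D^{2})^{-\frac{1}{2}}$; they follow more directly than you suggest, by pulling back the bounded resolvents $(D\pm i)^{-1}\in\LL(\EE)$ via Proposition \ref{adj}: the operators $r^{*}\big((D\pm i)^{-1}\big)$ invert $r^{*}(D)\pm i$ on the core $\Gamma_{c}(\GG;r^{*}\mathfrak{dom}(D))$, so $r^{*}(D)\pm i$ have dense range in both $\EE\rtimes\GG$ and $\EE\rtimes_{r}\GG$, and the standard Hilbert-module criterion (see \cite{Lance}) then gives self-adjointness and regularity of the closure in one stroke. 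With those adjustments your argument is complete and matches the referenced proof.
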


\begin{proof}
	Here $\overline{r^{*}(D)}$ means the closure of the operator $r^{*}(D)$ defined in Remark \ref{unbddadj}.  The same arguments used in \cite[Proposition 2.11]{macr1} now apply, provided one switches out the half-densities therein for a choice of Haar system on $\GG$.
\end{proof}

	\bibliographystyle{amsplain}
	\bibliography{references}
	
\end{document}